\newcommand{\eps}{{\varepsilon}}
\theoremstyle{plain}
\newtheorem{theorem}{Theorem}
\newtheorem{proposition}[theorem]{Proposition}
\newtheorem{lemma}[theorem]{Lemma}
\newtheorem{corollary}[theorem]{Corollary}
\theoremstyle{definition}
\newtheorem{definition}[theorem]{Definition}
\newtheorem{remark}[theorem]{Remark}
\numberwithin{equation}{section}
\numberwithin{theorem}{section}
\let\Re=\undefined\DeclareMathOperator*{\Re}{Re}
\let\Im=\undefined\DeclareMathOperator*{\Im}{Im}
\def\ge{\geqslant}
\def\le{\leqslant}
\def\geq{\geqslant}
\def\leq{\leqslant}
\def\b{\boldsymbol}
\def\R{\mathbb{R}}
\def\C{\mathbb{C}}
\begin{document}
\title[Scattering for NLS system in $\mathbb{R}^5$]{Scattering For  three waves Nonlinear Schr\"odinger System with mass-resonance in 5D}

\author[Fanf. Meng]{Fanfei Meng}
\address{The Graduate School of China Academy of Engineering Physics,
	Beijing, 100088, P.R. China }
\email{mengfanfei17@gscaep.ac.cn}
\author[S. Wang]{Sheng Wang}
\address{Shanghai Center for Mathematical Sciences, Fudan University, Shanghai 200433, P.R. China}
\email{19110840011@fudan.edu.cn}	
\author[Chengb. Xu]{Chengbin Xu}
\address{The Graduate School of China Academy of Engineering Physics,
	Beijing, 100088, P.R. China }
\email{xuchengbin19@gscaep.ac.cn}

\maketitle

\begin{abstract}
In this paper,  we study the dynamics behavior of the \eqref{NLS system} with three waves interaction in the energy space $H^1(\mathbb{R}^5) \times H^1(\mathbb{R}^5)\times H^1(\mathbb{R}^5) $. Inspired by B. Dodson and J. Murphy in \cite{Dodson2018},
%In order to verify the corresponding conditions in the scattering criterion, we must exclude the concentration of mass.
we establish an  interaction Morawetz estimate for the \eqref{NLS system}, together with the criterion which proved by Tao-Dodson--Murphy we can get the scattering under the ground state in energy space with mass-resonance. Under the radial assumption, we can remove the mass-resonance condition.
\\

\noindent\textbf{Key words:} nonlinear Schr\"odinger system, scattering theory, interaction Morawetz estimate.
\end{abstract}

\maketitle
\section{Introduction}
Considering the Cauchy problem for the quadratic nonlinear Schr\"odinger system:
\begin{equation}\tag{$\text{NLS system}$}
\left\{
\begin{aligned}
& i \partial_{t}\b{\rm u} + A \b{\rm u}  =  \b{\rm f}(\b{\rm u}), \quad (t, x) \in \mathbb{R} \times \mathbb{R}^d,\\
& \b{\rm u}(0,x)= \b{\rm u}_0(x),
\end{aligned}
\right.
\label{NLS system}
\end{equation}
where $ \b{\rm u} $, $ \b{\rm u}_0 $, and $ \b{\rm 0} $ are all vector-valued complex functions with three components  defined as follow:
\begin{equation}
\b{\rm u} := \left(
\begin{aligned}
u^1 \\
u^2 \\
u^3
\end{aligned}
\right),~\b{\rm u_0} := \left(
\begin{aligned}
u^1_0 \\
u^2_0 \\
u^3_0
\end{aligned}
\right),
\end{equation}
and  $ A $ is a $ 3 \times 3$ matrix, $ \b{\rm f} : \C^3 \to \C^3  $ as follow
\begin{equation}\label{mass-res}
A=
\begin{pmatrix}
\kappa_1 \Delta & 0 & 0  \\
0                & \kappa_2 \Delta  & 0   \\
0                & 0                      & \kappa_3 \Delta
\end{pmatrix}, ~ \b{\rm f}(\b{\rm u}):= \left(
\begin{aligned}
-\overline{u^2}u^3 \\
-\overline{u^1}u^3 \\
-u^1 u^2
\end{aligned}
\right),
\end{equation}	
where $u^{i}$ : $\mathbb{R} \times \mathbb{R}^{d} \rightarrow \mathbb{C} $ are unknown functions, $\kappa_{i} \in \left(0,\infty\right)$ are real number $\left(i=1,2,3\right)$.

In terms of physics, the quadratic nonlinearity of \eqref{NLS system} arises from a model in the nonlinear optics describing the second harmonic generation and Raman amplification phenomenon  in plasma, this process is a nonlinear instability phenomenon (see \cite{Colin2009} for more detail).
In fact, \eqref{NLS system} is the form of infinite dimensional Hamilton if we treat $ \C \sim \R^2 $. 

\begin{equation}\tag{$\text{IDH}$}
\partial_{t} \b{\rm u} = S_6 \frac{\partial H}{\partial \overline{\b{\rm u}}}, 
\label{IDH}
\end{equation}
where $ S_6 $ is the standard $ 6 \times 6 $ symplectic matrix and $ H = H(\b{\rm u}) $ is the Hamiltion with 

\[
H(\b{\rm u}) = \sum_{i=1}^3 \frac{\kappa_i}{2}\Vert u^i \Vert_{{\dot H}^1}^2 - \Re \int_{\mathbb{R}^d} \overline{u^1}\overline{u^2}u^3 {\rm d}x. 
\]

Thus, by Emmy Noether's theorem, the solutions to \eqref{NLS system} conserve the \emph{mass}, \emph{energy} and \emph{momentum}, corresponding to the transfomations of rotation of phase, translation of time and translation of space respectively, defined as follows
\begin{equation*}
\begin{aligned}
M(\b{\rm u}) :& = \frac{1}{2}\Vert u^1 \Vert_{L^2}^2 + \frac{1}{2} \Vert u^2 \Vert_{L^2}^2 +\Vert u^3 \Vert_{L^2}^2  \equiv M(\b{\rm u}_{0}), \\
E(\b{\rm u}) :& = K(\b{\rm u}) -V(\b{\rm u}) \equiv E(\b{\rm u}_{0}), \\
P(\b{\rm u}) :& = \Im \int_{\mathbb{R}^d} \left( \overline{u^1}\nabla u^1 + \overline{u^2}\nabla u^2 + \overline{u^3}\nabla u^3 \right) {\rm d}x \equiv P(\b{\rm u}_0),
\end{aligned}
\end{equation*}
where
\[
\begin{aligned}
& (\text{kinetic energy}) \quad & K(\b{\rm u}): & = \frac{\kappa_1}{2}\Vert u^1 \Vert_{{\dot H}^1}^2 + \frac{\kappa_2}{2} \Vert u^2 \Vert_{{\dot H}^1}^2 + \frac{\kappa_3}{2} \Vert u^3 \Vert_{{\dot H}^1}^2  , \\
& (\text{potential energy}) \quad & V(\b{\rm u}) :& = \Re \int_{\mathbb{R}^d} \overline{u^1}\overline{u^2}u^3 {\rm d}x.
\end{aligned}
\]

The equation \eqref{NLS system} also shares the scaling invariance 
\[
\b{\rm u}_{\lambda}(t, x) = \lambda^2 \b{\rm u} \left( \lambda^2 t, \lambda x \right)
\]
for $ \lambda > 0 $.
If we set ${{\dot{\rm H}}^s}:= {\dot H}^s \times {\dot H}^s \times {\dot H}^s $,  then the critical regularity of Sobolev space is $ {\rm \dot{H}}^{s_c} $,
i.e. $ \Vert \b{\rm u}_{\lambda} \Vert_{{\rm \dot{H}}_{x}^{s_c}(\mathbb{R}^d)} =  \Vert \b{\rm u} \Vert_{{\rm \dot{H}}_{x}^{s_c}(\mathbb{R}^d)} $ where $ s_c = \frac{d}{2} - 2 $.
Therefore, the equation \eqref{NLS system} is called mass-subcritical if $ d \le 3 $, mass-critical if $ d=4 $, energy-subcritical if $ d=5 $, and energy-critical if $ d=6 $.

%Besides, $  $ is called the mass-resonance condition, in which case \eqref{NLS system} has the Galilean invariance:
%\[
%\left(
%\begin{aligned}
%u(t, x) \\
%v(t, x)
%\end{aligned}
%\right) \to \left(
%\begin{aligned}
%e^{ix\cdot\xi}e^{-t\vert\xi\vert^2}u(t, x-2t\xi) \\
%e^{2ix\cdot\xi}e^{-2t\vert\xi\vert^2}v(t, x-2t\xi)
%\end{aligned}
%\right)
%\]
%for any $ \xi \in \mathbb{R}^d $, while \eqref{NLS system} does not have this invariance as long as $ \kappa \ne \frac{1}{2} $.

Now, we review some basic facts about the nonlinear Schr\"odinger equation:
\[\tag{$\text{NLS}$}
i\partial_{t}u + \Delta u = \mu \vert u \vert^{p-1} u , \quad (t, x) \in \mathbb{R} \times \mathbb{R}^d,
\label{NLS}
\]
The case $\mu=1$ is the defocusing case, while $\mu=-1$ gives the focusing case.
For the defoucsing case, the potential energy has the same sign as the kinetic energy; while for the focusing case,  the contray is the case. In contrast to the classical nonlinear  Schr\"odinger equation \eqref{NLS}, the \eqref{NLS system} do not have classification of the so-called focusing and defocusing, since we can not figure out the sign  of potential energy.

Both defocusing case ($ \lambda > 0 $) and focusing case ($ \lambda < 0 $) of \eqref{NLS} have been studied in a large amount of literature,
such as \cite{Bourgain1999, Colliander2008, Dodson2012,  Dodson2017, Duyckaerts2007, Killip2010, Miao2013, Miao2014, Miao2017, Visan2007}
by J.Bourgain, T. Tao, B. Dodson, T. Duyckaerts, R. Killip, C. Miao, M. Visan and so on.

Similarly to the classical \eqref{NLS}, we can define the Galilean transformation to the \eqref{NLS system} as follows:
\[
\left(
\begin{aligned}
u^1(t, x) \\
u^2(t, x) \\
u^3(t,x)
\end{aligned}
\right) \to \left(
\begin{aligned}
e^{i\frac{x\cdot\xi}{\kappa_1}}e^{-it\frac{\vert\xi\vert^2}{\kappa_1}}u^1(t, x-2t\xi) \\
e^{i\frac{x\cdot\xi}{\kappa_2}}e^{-it\frac{\vert\xi\vert^2}{\kappa_2}}u^2(t, x-2t\xi) \\
e^{i\frac{x\cdot\xi}{\kappa_3}}e^{-it\frac{\vert\xi\vert^2}{\kappa_3}}u^3(t, x-2t\xi)
\end{aligned}
\right).
\]
It is easy to check that the \eqref{NLS system} is invariant under Galiean transformation only if the coefficient satisfies
\begin{equation}\label{mr}
\frac{1}{\kappa_3}=\frac{1}{\kappa_1} + \frac{1}{\kappa_2}.
\end{equation}
So we call the \eqref{mr} is the mass-resonance condition.

In this article, we consider energy-subcritical case (d=5) of \eqref{NLS system} under the mass-resonance condition \eqref{mr}.
By solution, we mean a function $ \b{\rm u} \in C_{t}(I, {\rm H}_{x}^1(\mathbb{R}^5)) $ on an interval $ I \ni 0 $ satisfying the Duhamel formula
\[
\b{\rm u}(t) = \mathcal{S}(t) \b{\rm u}_0 + i \int_{0}^{t} \mathcal{S}(t-s) \b{\rm f}(\b{\rm u}(s)) {\rm d}s
\]
for $ t \in I $,
where we denote Schr\"odinger flow $ \mathcal{S}(t)\b{\rm w} =: \left( e^{\kappa_1 it\Delta}w^1, e^{\kappa_2 it\Delta}w^2, e^{\kappa_3 it\Delta}w^3 \right)^T $ for any $ \b{\rm w} =: (w^1, w^2, w^3)^T $.
In order to study the well-posedness of \eqref{NLS system}, we need Strichartz estimates.
By using the standard contraction mapping theorem, we can show: $ \exists ~ \delta_0 > 0 $ such that, if
\begin{equation*}
\left\Vert \mathcal{S}(t) \b{\rm u}_0 \right\Vert_{{\rm L}_{t}^6(I, {\rm L}_{x}^3(\mathbb{R}^5))} < \delta_0,
\end{equation*}
then there exists a unique global solution $ \b{\rm u}(t) = (u^1(t), u^2(t), u^3(t))^T $ to \eqref{NLS system}.
For the large initial data, we have solution $ \b{\rm u}(t) $ with a maximal interval of existence $ I_{\max} = (T_-(\b{\rm u}), T_+(\b{\rm u})) $.
A global solution $ \b{\rm u} $ ``scatters'', i.e.
there exists $ \b{\rm u}_{\pm} \in {\rm H}^1(\mathbb{R}^5) $ such that
\begin{equation*}
\lim_{t \to \pm \infty} \left\Vert \b{\rm u}(t) - \mathcal{S}(t) \b{\rm u}_{\pm} \right\Vert_{{\rm H}^1(\mathbb{R}^5)} = 0.
\end{equation*}

\begin{definition}[Trivial scattering solution, \cite{Masaki2021}]
	We say a solution to a nonlinear equation is a \emph{trivial scattering solution} if it also solves the corresponding linear equation.
	(In other words, the solution which does not have nonlinear interaction.)
\end{definition}
\begin{remark}
	For \eqref{NLS}, zero is the only trivial scattering solution.
	While for \eqref{NLS system}, $ (u^1, u^2, u^3)^T = (0, 0, e^{\kappa_3 it\Delta} u^3_0) $ is  a trivial scattering solution for any $ u^3_0 \in  H^1(\R^5) $.
\end{remark}

The \eqref{NLS system} admits a global but nonscattering solution
\[
(u^1(t, x), u^2(t, x), u^3(t,x))^T = (e^{it}\phi_1(x), e^{it}\phi_2(x), e^{2it}\phi_3(x))^T,
\]
where $\b{\rm Q} := (\phi_1, \phi_2,\phi_3)^T \in \Xi$ is a non-negative radial solution to the elliptic system
\begin{equation}
\left\{
\begin{aligned}
& \phi_1- \kappa_1\Delta \phi_1 = \phi_2 \phi_3, \\
& \phi_2- \kappa_2\Delta \phi_2 = \phi_1 \phi_3, \\
& 2 \phi_3- \kappa_3\Delta \phi_3 =\phi_1 \phi_2
\end{aligned}
\right.
\quad x \in \mathbb{R}^5,
\label{gs}
\end{equation}
where $ \Xi :=\left\{ \b{\rm G} \in {\rm H}^1(\R^5)  ~ \big\vert ~ V(\b{\rm G}) \neq 0 \right\} $.

We call $\b{\rm Q} := (\phi_1, \phi_2,\phi_3)^T$ the ``{\bf ground state}''.
In \cite{Hamano2018}, M. Hamano determined the global behavior of the solutions to the system with data below the ground state and proved a blowing-up result if the data had finite variance or was radial.

Our main result in this paper is follows:
\begin{theorem}\label{main}
For the \eqref{NLS system} under the mass-resonance condition \eqref{mr}.
If the initial data $ \b{\rm u}_0 \in {\rm H}^1(\mathbb{R}^5) $ satisfies
$ M( \b{\rm u}_0 ) E( \b{\rm u}_0 ) < M( \b{\rm Q} ) E( \b{\rm Q} ) $ and $ M(\b{\rm u}_0) K(\b{\rm u}_0) \le M(\b{\rm Q}) K(\b{\rm Q}) $,
then the solution of \eqref{NLS system} is global and scatters in $ {\rm H}^1(\R^5) $.
\end{theorem}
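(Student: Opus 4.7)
The plan is to prove Theorem \ref{main} in three stages: (i) use a sharp vector-valued Gagliardo--Nirenberg inequality together with the conservation of mass and energy to trap the flow below the ground state threshold, obtaining uniform $H^1$ bounds and hence global existence; (ii) establish an interaction Morawetz estimate adapted to the three-component system, where the mass-resonance condition \eqref{mr} will be essential to cancel the mismatched phase terms coming from the distinct dispersion coefficients $\kappa_i$; (iii) feed that space--time bound into the Tao--Dodson--Murphy scattering criterion to conclude.

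For step (i), the $\b{\mathrm Q}$-variational characterization should come from the sharp inequality
\[
|V(\b{\mathrm u})| \le C_{\mathrm{GN}}\, K(\b{\mathrm u})^{\frac{5}{4}}M(\b{\mathrm u})^{-\frac{1}{4}},
\]
whose optimizer is the ground state (this is standard once one rescales each $u^i$ to absorb $\kappa_i$ and invokes concentration-compactness to identify the maximizer with a solution of \eqref{gs}). Plugging this into $E=K-V$ and using the sub-threshold assumption $M(\b{\mathrm u}_0)E(\b{\mathrm u}_0)<M(\b{\mathrm Q})E(\b{\mathrm Q})$ combined with $M(\b{\mathrm u}_0)K(\b{\mathrm u}_0)\le M(\b{\mathrm Q})K(\b{\mathrm Q})$ produces, by continuity in $t$, a strict gap $M(\b{\mathrm u}(t))K(\b{\mathrm u}(t))\le(1-\delta)M(\b{\mathrm Q})K(\b{\mathrm Q})$ for all $t$ in the lifespan, which yields both the uniform $H^1$ bound and a coercive lower bound on a quantity of the form $K(\b{\mathrm u})-c\,V(\b{\mathrm u})$ that will appear as the good term in the Morawetz identity.

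Step (ii) is where the main work lies. Following Dodson--Murphy, I would choose the weight $a(x,y)=|x-y|$ (or a smoothed truncation thereof) and define
\[
M(t) = 2\sum_{i=1}^{3} \kappa_i \Im \iint |u^i(t,x)|^2 \, \nabla_y a(x,y)\cdot \nabla u^i(t,y)\,\overline{u^i(t,y)}\,dx\,dy,
\]
where each component acts as its own mass density weighted against the momentum of the others. Differentiating in time and using \eqref{NLS system}, the quadratic kinetic terms give the usual positive-definite contribution (after integration by parts against $\Delta_y a$ and the Laplacian of $|x-y|$ in $\R^5$, which is positive), while the cubic potential terms reshuffle into integrals of the triple product $\overline{u^1}\,\overline{u^2}\,u^3$. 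The crucial point is that, when one tracks the coefficients produced by the three different dispersions, the mass-resonance identity $\kappa_3^{-1}=\kappa_1^{-1}+\kappa_2^{-1}$ is precisely what is needed for the cross-phase contributions to recombine into a single real term proportional to $V(\b{\mathrm u})$, so that the right-hand side becomes a multiple of the coercive quantity $K-cV$ controlled in step (i). Integrating in time and using the $H^1$ bound on $M(t)$ then yields a Morawetz bound of the schematic form
\[
\int_I \iint \frac{|\b{\mathrm u}(t,x)|^2 |\b{\mathrm u}(t,y)|^2}{|x-y|}\,dx\,dy\,dt \lesssim 1,
\]
uniformly in $I$. The main obstacle I anticipate is carrying out this cancellation cleanly while simultaneously handling the error from a spatial truncation of $a$, since without truncation one loses integrability in dimension $5$.

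Step (iii) is then essentially mechanical: the Morawetz estimate, via Hölder and Sobolev in $\R^5$, implies that on any long time window there exists a subinterval of controlled length on which the mass in every ball of radius $R$ is as small as one wishes; this is exactly the hypothesis of the Tao--Dodson--Murphy scattering criterion (whose statement for this system should be recorded as a lemma earlier in the paper), and combined with the uniform $H^1$ bound it gives $\|\b{\mathrm u}\|_{L_t^6 L_x^3(\R\times\R^5)}<\infty$, which in turn implies scattering by standard Strichartz/persistence-of-regularity arguments.
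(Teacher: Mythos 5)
Your three-stage outline (variational trapping $\to$ interaction Morawetz $\to$ scattering criterion) matches the paper's architecture, but step (ii), where you say the main work lies, is missing the central technical device, and the functional you wrote down has the wrong structure.

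First, the Morawetz functional. You propose the diagonal sum
$M(t)=2\sum_i \kappa_i\Im\iint|u^i(x)|^2\nabla_y a\cdot\nabla u^i(y)\overline{u^i(y)}\,dx\,dy$.
The paper instead pairs the \emph{total} momentum density $\Im\bigl(\sum_i\overline{u^i(x)}\nabla u^i(x)\bigr)$ against a mass density weighted by the reciprocal masses,
$N_{\kappa_1\kappa_2\kappa_3}=\kappa_1\kappa_2\kappa_3\sum_i\tfrac{|u^i(y)|^2}{\kappa_i}$.
These weights are not cosmetic: they are exactly what makes the kinetic contribution of $\frac{d}{dt}M(t)$ produce the coercive quantity $\sum_i\kappa_i|\nabla u^i|^2$ and, more importantly, what makes the main term Galilean invariant after the boost below. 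A diagonal functional misses the $i\ne j$ interactions that produce the positive-definite structure, and the per-component weight $\kappa_i$ (rather than $\kappa_j\kappa_k$) would break the cancellations in the cubic terms and in the boost computation.

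Second, and most critically, you never introduce the Galilean boost parameter $\xi=\xi(t,s,R)$. The Dodson--Murphy mechanism for the \emph{non-radial} sub-threshold problem is to truncate the Morawetz weight to a ball of radius $R$ about a moving center $s$, and then, inside that ball, pass to a boosted frame $u^{i,\xi}=e^{ix\cdot\xi/\kappa_i}u^i$ in which the localized momentum vanishes (see \eqref{xi} in the paper). It is only after this boost that the coercivity on balls (Lemma \ref{coerb}) yields a sign for the combined term $\mathcal{C+E}$ (see \eqref{C+E}), and the mass-resonance condition \eqref{mr} is precisely what makes the component-wise boost a symmetry of the system so that $M$, $V$, and the localized momentum transform correctly. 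Your description of the role of \eqref{mr} --- that it "recombines the cross-phase contributions into a single real term proportional to $V$" --- is therefore not quite the mechanism; the cubic terms are instead handled by a decomposition of $4\psi+\phi$ and absorbed into the coercive bound, while \eqref{mr} is what makes the boost legitimate and hence the coercivity lemmas applicable to $\b{\rm u}^\xi$. Without the boost your derivative $\frac{d}{dt}M(t)$ cannot be made sign-definite for non-radial data.

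Two smaller points. Your Gagliardo--Nirenberg inequality has the wrong sign on the mass exponent: in $\R^5$ one has $V(\b{\rm u})\le C_{GN}\,M(\b{\rm u})^{1/4}K(\b{\rm u})^{5/4}$, not $M^{-1/4}$. And the scattering criterion used here (Proposition \ref{scat}) is a smallness condition on $\|\b{\rm u}\|_{L_t^6L_x^3}$ on a subinterval, not a "small mass in every ball" condition; what one actually extracts from the localized Morawetz bound is, via the pigeonhole principle, Hölder, and interpolation, a time slab on which $\|(\chi u^1,\chi u^2,\chi u^3)\|_{L_t^6L_x^3}\lesssim\eps^{1/24}$, and it is this that feeds into the criterion.

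\end{document}
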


M. Hamano in \cite{Hamano2018} originally studied the \eqref{NLS system} with two waves interaction under mass-resonance, through the use of concentration compactness by Kenig-Merle in \cite{Kenig2006}. Later, using concentration compactness again, M. Hamano, T. Inui and K. Nishimura in \cite{Hamano2019} studied the scattering for \eqref{NLS system} with two waves interaction  without mass-resonance in the radial case. Inspired by B. Dodson and J. Murphy in \cite{Dodson2018}, we have present a proof for the non-radial \eqref{NLS system} with two waves interaction under the mass-resonance in \cite{Meng2020} and we continue the work of scattering result corresponding to \eqref{NLS system} here.

\begin{remark}
In fact, the results of Theorem \ref{main} essentially holds for any $ \kappa_i> 0,  (i=1, 2, 3) $ if the initial data $ \b{\rm u}_0 $ is radial. Without mass-resonance \eqref{mr}, we have to add the radial assumption owing to the lack of Galilean invariance when we study the scattering for \eqref{NLS system}.
Under this circumstance, if the \eqref{NLS system} produce the mass concentration, the assumption of radial implies it must be near the origin.
Thus, we only need to use the simpler Morawetz estimate instead of Proposition \ref{decay} to verify the condition of scattering criterion.
\end{remark}

Our proof of Theorem \ref{main} consists of two steps:
firstly, we establish a scattering criterion as follows, using the method from B. Dodson, J. Murphy \cite{Dodson2016} and T. Tao \cite{Tao2004}.
Secondly, in order to verify the condition of the above criterion,
we prove a certain decay estimate, which can be deduced from an interaction Morawetz estimate.
The proof of the following interaction Morawetz estimate relies on the Galilean invariance of \eqref{NLS system}.
\begin{proposition}[Interaction Morawetz estimate]\label{decay}
Let $ \b{\rm u}_0, \b{\rm Q}, I $ be as in Theorem \ref{scat}, and suppose further that
\begin{equation}
M(\b{\rm u}_0) = E(\b{\rm u}_0) = E_0.
\label{E0}
\end{equation}
Let $ \b{\rm u} : \mathbb{R} \times \mathbb{R}^5 \to \mathbb{C} $ be the corresponding global solution to \eqref{NLS system}.
Then there exists $ \delta > 0 $ such that for $ R_0 = R_0(\delta, M(\b{\rm u}), \b{\rm Q}) $ sufficiently large,
\begin{equation*}
\begin{aligned}
&\frac{\delta}{JT_0} \int_{I} \int_{R_0}^{R_0e^{J}}\frac{1}{R^5} \iiint L^{\xi}_{\kappa_1\kappa_2\kappa_3} \chi^{2} \left( \frac{x - s}{R} \right) \Gamma^{2} \left( \frac{y - s}{R} \right) N_{\kappa_1\kappa_2 \kappa_3} {\rm d}x {\rm d}y {\rm d}s \frac{{\rm d}R}{R} {\rm d}t \lesssim \nu,
\end{aligned}
\label{Me}
\end{equation*}
with $ \nu = \frac{R_0e^{J}}{JT_0} + \eps $, where $ 0 < \eps < 1, J \ge \eps^{-1} $ are both constant and
$ L^{\xi}_{\kappa_1\kappa_2\kappa_3} =  \kappa_1 \vert \nabla u^{1,\xi}(x) \vert^2 + \kappa_2 \vert \nabla u^{2,\xi}(x) \vert^2 + \kappa_3 \vert \nabla u^{3,\xi} \vert^2 $, $N_{\kappa_1 \kappa_2 \kappa_3 } = \kappa_2\kappa_3  \vert u^1(y) \vert^2 + \kappa_1\kappa_3\vert u^2(y) \vert^2$
$+ \kappa_1\kappa_2\vert u^3(y) \vert^2  $,
$ \chi\in C^{\infty} $ is a radial decreasing function satisfying
\begin{equation}
\chi(x) = \left\{
\begin{aligned}
1 & \quad \quad \vert x \vert \le 1 - \eps, \\
0 & \quad \quad \vert x \vert > 1,
\end{aligned}
\right.
\label{Gamma}
\end{equation}
and $ \b{\rm u}^{\xi} = (e^{ \frac{ix \cdot xi}{ \kappa_1 }}u^1, e^{ \frac{ix \cdot xi}{ \kappa_2 }}u^2, e^{ \frac{ix \cdot xi}{ \kappa_3 }}u^3) $ with
\begin{equation}
\xi = - \frac{ \kappa_1 \kappa_2 \kappa_3 \int_{\mathbb{R}^5} \Im \left( \overline{u^1(x)} \nabla u^1+ \overline{u^2(x)} \nabla u^2 +  \overline{u^3(x)} \nabla u^3 \right) \chi^{2} \left(\frac{x-s}{R}\right) {\rm d}x}{\int_{\mathbb{R}^5} \left(\kappa_2 \kappa_3 \vert u^1(x) \vert^2 + \kappa_1\kappa_3 \vert u^2(x) \vert^2  + \kappa_1\kappa_2 \vert u^3 \vert^2 \right) \chi^{2} \left(\frac{x-s}{R}\right){\rm d}x}
\label{xi}
\end{equation}
(unless the denominator is zero, in which case  $\xi = \xi(t, s, R) = \b{\rm 0 }  $).
\end{proposition}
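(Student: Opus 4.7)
The plan is to adapt the Dodson--Murphy interaction Morawetz framework from \cite{Dodson2018} to the three-component setting, exploiting the mass-resonance condition \eqref{mr} to obtain the positive main term. For fixed scale $R>0$, center $s \in \mathbb{R}^5$, and time $t$, I first pass to the Galilean-boosted components $\mathbf{u}^{\xi}$ with $\xi=\xi(t,s,R)$ given by \eqref{xi}, and introduce the truncated interaction Morawetz action
\[
\mathcal{M}_{R,s}(t) \;:=\; 2\int_{\mathbb{R}^5}\!\!\int_{\mathbb{R}^5} \Gamma^{2}\!\left(\tfrac{y-s}{R}\right) N_{\kappa_1\kappa_2\kappa_3}(y)\; \mathrm{Im}\!\sum_{i=1}^{3}\kappa_{i}\,\overline{u^{i,\xi}(x)}\,\nabla u^{i,\xi}(x)\cdot\nabla\!\left[\chi^{2}\!\left(\tfrac{x-s}{R}\right)\right] dx\,dy.
\]
The weight $\chi^{2}((x-s)/R)$ plays the role of a truncated Morawetz potential and $\Gamma^{2}((y-s)/R) N(y)$ gives the tensor factor carrying the $y$-mass.

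Next, I differentiate $\mathcal{M}_{R,s}$ in time using \eqref{NLS system}. By the standard Morawetz virial computation applied componentwise with the correct weights $\kappa_i$, the linear part produces a sum of (i) the desired positive main term, which after using $|\nabla\chi^{2}| \sim \chi^{2}/R$ on the support and choosing the Hessian structure of $\chi^{2}(\cdot/R)$ appropriately, controls
\[
\tfrac{1}{R}\iint L^{\xi}_{\kappa_1\kappa_2\kappa_3}\,\chi^{2}\!\left(\tfrac{x-s}{R}\right)\Gamma^{2}\!\left(\tfrac{y-s}{R}\right)N_{\kappa_1\kappa_2\kappa_3}\,dx\,dy,
\]
and (ii) a momentum-squared piece $\frac{1}{R}\big|\int \mathrm{Im}(\overline{\mathbf{u}}\cdot\nabla\mathbf{u})\,\chi^{2}\big|^{2}$, which is exactly the term cancelled by our choice of $\xi$ in \eqref{xi}; this is where the Galilean invariance (and hence the mass-resonance \eqref{mr}) is essential. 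The contribution of the nonlinearity $\mathbf{f}(\mathbf{u})$ is handled using the Leibniz identity that turns the trilinear product $\overline{u^{1}}\,\overline{u^{2}}\,u^{3}$ into a total divergence modulo lower-order terms, again available only under \eqref{mr}; the resulting boundary terms can be absorbed into the $\varepsilon$-error after log-averaging.

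The third step is to integrate the identity $\frac{d}{dt}\mathcal{M}_{R,s}(t)$ over $t\in I$, then average in $R$ against $\frac{dR}{R}$ on $[R_{0},R_{0}e^{J}]$ (picking up the $\frac{1}{J}$ gain) and in $s$ over $\mathbb{R}^{5}$ with the natural normalization $\frac{1}{T_{0}}$ built into $\nu$. The uniform pointwise bound
\[
|\mathcal{M}_{R,s}(t)| \;\lesssim\; R\,M(\mathbf{u}_{0})\,K(\mathbf{u}_{0})^{1/2}\,M(\mathbf{u}_{0})^{1/2},
\]
coming from Cauchy--Schwarz plus conservation of mass and energy (together with the coercivity $K(\mathbf{u}) \lesssim E_{0}$ below the ground state), supplies the $\frac{R_{0}e^{J}}{JT_{0}}$ contribution in $\nu$. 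All remaining error terms --- those arising from derivatives landing on $\chi$ and $\Gamma$ outside the main support, and from commutators generated by the time-dependence of $\xi(t,s,R)$ --- decay on the annular region $\{1-\varepsilon < |\cdot|<1\}$ and, after averaging in $R$, contribute at most $\varepsilon$.

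The main obstacle I expect is the bookkeeping around the nonlinear contribution to $\frac{d}{dt}\mathcal{M}_{R,s}$: one must verify that the trilinear interaction $\overline{u^{1}}\,\overline{u^{2}}\,u^{3}$ produces, after integration by parts and use of \eqref{mr}, only a divergence of $V(\mathbf{u})$ together with terms that are genuinely localized to the cutoff annulus and therefore absorbed by the $\varepsilon$-term. A secondary difficulty is that $\xi(t,s,R)$ is itself time-dependent, so that $\partial_{t}\mathcal{M}_{R,s}$ strictly contains a $\partial_{t}\xi$ contribution; one checks that these terms are perfect derivatives of mass densities (hence vanish in $L^{1}$-trace by mass conservation) or are again supported in the annulus.
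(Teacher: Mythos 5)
Your proposal diverges from the paper in several ways, and at least one of them is a genuine error rather than a harmless variant.

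\textbf{The Morawetz direction field is wrong.} You define
\[
\mathcal{M}_{R,s}(t) = 2\iint \Gamma^{2}\!\left(\tfrac{y-s}{R}\right) N(y)\,\Im\!\sum_{i}\kappa_{i}\,\overline{u^{i,\xi}(x)}\,\nabla u^{i,\xi}(x)\cdot\nabla\!\bigl[\chi^{2}\bigl(\tfrac{x-s}{R}\bigr)\bigr]\,dx\,dy,
\]
so the weight whose Hessian produces the main term of the virial computation is $\chi^{2}((x-s)/R)$ itself. This weight is constant on $\{|x-s|\le (1-\varepsilon)R\}$, so $\nabla\chi^{2}$ \emph{vanishes identically in the bulk} and is supported only on the thin annulus $\{(1-\varepsilon)R < |x-s| < R\}$; the claimed estimate $|\nabla\chi^{2}|\sim\chi^{2}/R$ on the support is false exactly where the solution mass lives. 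Consequently the Hessian of $\chi^{2}(\cdot/R)$ has no sign and cannot produce the positive kinetic term $\frac{1}{R^{5}}\iiint L^{\xi}\chi^{2}\chi^{2}N$ of \eqref{Me}. The paper instead takes $a(x)=\int_{0}^{|x|}\psi(r)\,r\,dr$ with $\psi$ built from $\phi=\frac{1}{\omega_5 R^5}\chi^{2}(\cdot/R)*\chi^{2}(\cdot/R)$; one then has $\Delta a = 4\psi+\phi$, $a_{jk}=\delta_{jk}\phi+P_{jk}(\psi-\phi)$ with $\psi-\phi\ge 0$, so $\nabla a\approx x$ on $|x|\lesssim R$ and the main term has the correct sign and support. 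Without such a weight the computation simply does not yield the desired lower bound.

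\textbf{Placing the boost inside the time-dependent functional creates a real obstruction.} You build $\xi(t,s,R)$ into $\mathcal{M}_{R,s}$ and then acknowledge a $\partial_{t}\xi$ term, claiming it is a perfect derivative or annulus-supported. But $\xi$ is defined as the ratio of a truncated momentum to a truncated mass, both of which evolve by nonlocal flux terms; $\partial_{t}\xi$ is neither a total derivative nor small, and I do not see how to absorb it. The paper sidesteps this entirely: the functional $M(t)$ contains \emph{no} boost, one first computes $\frac{d}{dt}M(t)$ and decomposes it as $\mathcal{A}+\cdots+\mathcal{F}$, and only then observes that the specific combination $\mathcal{C}+\mathcal{E}$ is pointwise Galilean-invariant (here mass-resonance \eqref{mr} enters), so it may be rewritten using $\xi$ at each fixed $(t,s,R)$ with no $\partial_t\xi$ ever appearing.

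\textbf{Treatment of the nonlinear term and the missing coercivity step.} You propose to write the trilinear contribution as a total divergence via \eqref{mr}. In the paper the nonlinear contribution to $\frac{d}{dt}M$ is $\mathcal{B}=-2\iint\Re(\overline{u^{1}u^{2}}u^{3})\,\Delta a\,N$, which is \emph{not} a total divergence; it is split via the identity $4\psi+\phi=5\phi_{1}+4(\psi-\phi)+5(\phi-\phi_{1})$ into a bulk piece plus small errors, and the bulk piece $\mathcal{G}$ is then combined with the kinetic piece $\mathcal{J}$ and bounded below by $\delta\,\mathcal{J}$ using the coercivity-on-balls lemma (Lemma \ref{coerb}, which relies on the sharp Gagliardo--Nirenberg inequality and the sub-threshold hypothesis). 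This coercivity step is where the hypothesis $M(\mathbf{u}_0)K(\mathbf{u}_0)\le M(\mathbf{Q})K(\mathbf{Q})$ and \eqref{mr} actually do their work; your sketch omits it entirely, and without it there is no mechanism to keep the nonlinear term from overwhelming the kinetic main term.

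In short: your overall scaffold (differentiate a truncated interaction Morawetz functional, average logarithmically in $R$ and in $t$, kill the cross momentum term by a Galilean boost) is the right shape, but the chosen weight field does not produce a signed main term, the boost must be introduced after differentiation rather than inside the functional, and the essential coercivity argument against the nonlinear term is missing.
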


Combining the two steps, that are respectively formulated as above, we can obtain Theorem \ref{main}.
\begin{remark}
{\bf Why we need $ \frac1{\kappa_3} = \frac1{\kappa_1} + \frac1{\kappa_2} $?}

That $ \mathcal{C + E} $ in \eqref{C+E} always stays invariance under Galilean transformation is important in the computation of $ \frac{\rm d}{{\rm d}t} M(t) $. 
Physically, $ \kappa_i \left(i=1,2,3\right)$ reflects the masses of the three particles in this system. 
The  mass-resonance condition \eqref{mr} is used to match the linear terms and nonlinear term in \eqref{NLS system}.
Only when the \eqref{NLS system} satisfies the mass-resonance \eqref{mr},   we can deduce the coercivity  estimates (Lemma \ref{coer2}, Lemma \ref{coerb}),
which is necessary to bound the major term of the interaction Morawetz estimate.
\end{remark}

\subsection{Outline of the paper}
The organization of this paper is as follows.
In Section 2, we clarify some preliminaries, especially, Strichartz estimates and well-posedness. In Section 3, we study the variational analysis of the ground state, and use this to obtain some coercivity estimates. Later, we prove the scattering criterion and interaction Morawetz estimate in Section 4 and 5, respectively. Finally, in Section 6, we use the results of Proposition \ref{decay} and \ref{scat} to complete the proof of main theorem.

\section{Preliminaries}
We mark $ A \lesssim B $ to mean there exists a constant $ C > 0 $ such that $ A \leqslant C B $.We indicate dependence on parameters via subscripts, e.g. $ A \lesssim_{x} B $ indicates $ A \leqslant CB $ for some $ C = C(x) > 0 $.

We write $ L_{t}^{q}L_{x}^{r} $ to denote the Banach space with norm
\begin{equation*}
\Vert u \Vert_{L_{t}^{q}(\mathbb{R}, L_{x}^{r}(\mathbb{R}^5))} := \left( \int_{\mathbb{R}} \left( \int_{\mathbb{R}^5} \vert u(t, x) \vert^{r} \mathrm{d}x \right) ^{\tfrac{q}{r}} \mathrm{d}t \right)^{\tfrac{1}{q}},
\end{equation*}
with the usual modifications when $ q $ or $ r $ are equal to infinity,
or when the domain $ \mathbb{R} \times \mathbb{R}^5 $ is replaced by space-time slab such as $ I \times \mathbb{R}^5 $.

Lastly, to fit this artical, we use the notation $ {\rm L}^{q} $ to denote $ L^{q} \times L^{q} \times L^{q} $ and $ {\rm H}^s $ to denote $ H^s \times H^s  \times H^s $.

\subsection{Some useful inequalities}\label{ineq}
In this subsection, we show some important inequalities which  will be used frequently in the following sections.

\begin{lemma}
	[Hardy-Littlewood-Sobolev inequality]\label{HLS}
	If $ 0 < \beta < 5 $, $ 1 < q < p < +\infty $ and
	\begin{equation*}
	\frac{1}{q} = \frac{1}{p} + 1 - \frac{\beta}{5},
	\end{equation*}
	then,
	\begin{equation*}
	\left\Vert \vert \cdot \vert^{-\beta} * f \right\Vert_{L^p(\mathbb{R}^5)} \le C \Vert f \Vert_{L^q(\mathbb{R}^5)}.
	\end{equation*}
\end{lemma}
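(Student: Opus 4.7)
The plan is to prove this classical convolution inequality by the standard Marcinkiewicz interpolation approach: first establish a weak-type bound via a direct kernel-splitting argument, then upgrade to a strong bound by interpolation.

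First I would observe that the convolution kernel $K(y) = |y|^{-\beta}$ lies in weak $L^{5/\beta}(\mathbb{R}^5)$ but not in any Lebesgue space. Indeed, for each $\lambda > 0$,
\begin{equation*}
\bigl|\{y \in \mathbb{R}^5 : |y|^{-\beta} > \lambda\}\bigr| = \bigl|\{y : |y| < \lambda^{-1/\beta}\}\bigr| = c_5 \lambda^{-5/\beta}.
\end{equation*}
With the scaling relation $\tfrac{1}{q} + \tfrac{\beta}{5} = 1 + \tfrac{1}{p}$ in hand, the target is the Young-type estimate $L^q \ast L^{5/\beta,\infty} \subset L^p$.

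Next, for fixed $f$ with $\|f\|_{L^q} = 1$ and a threshold $A > 0$ to be chosen, I would split
\begin{equation*}
K = K_A^1 + K_A^2, \qquad K_A^1 := K\,\chi_{\{|y| < A\}}, \qquad K_A^2 := K\,\chi_{\{|y| \geq A\}}.
\end{equation*}
A direct calculation using $\beta < 5$ and (respectively) $\beta q' > 5$, which follows from $\beta - 5/q' = 5/p > 0$, gives
\begin{equation*}
\|K_A^1\|_{L^1} \lesssim A^{5-\beta}, \qquad \|K_A^2\|_{L^{q'}} \lesssim A^{5/q' - \beta} = A^{-5/p}.
\end{equation*}
Young's inequality then yields $\|K_A^1 \ast f\|_{L^q} \lesssim A^{5-\beta}$ and the pointwise bound $\|K_A^2 \ast f\|_{L^\infty} \lesssim A^{-5/p}$. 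For a given $\lambda > 0$, I choose $A = A(\lambda)$ so that the second quantity is at most $\lambda/2$; then on the set where $|K \ast f| > \lambda$, necessarily $|K_A^1 \ast f| > \lambda/2$, so Chebyshev gives
\begin{equation*}
\bigl|\{|K \ast f| > \lambda\}\bigr| \lesssim \lambda^{-q}\,\|K_A^1 \ast f\|_{L^q}^q \lesssim \lambda^{-q} A^{q(5-\beta)}.
\end{equation*}
Substituting the choice $A \sim \lambda^{-p/5}$ and using the algebraic identity $1 + p(1 - \beta/5) = p/q$ (which is exactly the scaling relation) collapses the right-hand side to $\lambda^{-p}$. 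By homogeneity this becomes the weak-type bound
\begin{equation*}
\bigl|\{|K \ast f| > \lambda\}\bigr| \lesssim \bigl(\|f\|_{L^q}/\lambda\bigr)^{p}.
\end{equation*}

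Finally, since the weak bound above holds simultaneously for every admissible pair $(q, p)$ on the scaling line, I would upgrade to a strong-type estimate by Marcinkiewicz interpolation: fix the target pair $(q, p)$ and pick two nearby admissible pairs $(q_0, p_0)$ and $(q_1, p_1)$ with $1 < q_0 < q < q_1$ on the same line; the linear operator $f \mapsto K \ast f$ satisfies weak-type estimates at both endpoints, and real interpolation produces the strong bound $\|K \ast f\|_{L^p} \lesssim \|f\|_{L^q}$. The only delicate point is the bookkeeping of exponents that reduces the substituted weak-type estimate to $\lambda^{-p}$; once that algebra is verified the rest is a standard application of kernel splitting and Marcinkiewicz interpolation.
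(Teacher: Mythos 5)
The paper states the Hardy--Littlewood--Sobolev inequality as a classical lemma and does not supply a proof, so there is no in-paper argument to compare against. Your proof is the standard textbook derivation (kernel splitting into a compactly supported $L^1$ piece and a tail in $L^{q'}$, a Chebyshev/optimization step to get a weak-type $(q,p)$ bound, then Marcinkiewicz interpolation along the scaling line), and the bookkeeping is correct: the weak-$L^{5/\beta}$ membership of the kernel, the identity $\beta - 5/q' = 5/p$ from the scaling relation, and the exponent algebra
\begin{equation*}
q + pq\Bigl(1 - \tfrac{\beta}{5}\Bigr) = p
\end{equation*}
all check out, and the requirement $\beta q' > 5$ needed for the tail integral to converge is exactly equivalent to $p < \infty$, which the hypotheses guarantee. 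The Marcinkiewicz step is legitimate because $q < p$ on the scaling line (a consequence of $\beta < 5$), so one can always choose admissible endpoint pairs $(q_0, p_0)$, $(q_1, p_1)$ straddling $(q, p)$. One small stylistic remark: rather than restricting to $\|f\|_{L^q} = 1$ at the outset and invoking homogeneity at the end, you could carry $\|f\|_{L^q}$ through the splitting estimates directly; the argument is unchanged and slightly cleaner. In short, the proposal is a correct and complete proof of a result the paper takes for granted.
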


We say the pair $(q,r)$ is $\dot{\rm H}^s$-admissible, if it  satisfies the condition
$$\frac{2}{q}=\frac{5}{2}-\frac{5}{r}-s,\ \ 2\leq q,r\leq\infty.$$
For $s\in(0,1)$, We define the sets $\Lambda_s$:
$$\Lambda_s:=\left\{(q,r)\ \text{is}\ \dot{H}^s\text{-admissible};\Big(\frac{10}{5-2s}\Big)^+\leq r\leq \Big(\frac{10}{3}\Big)^-\right\},$$
and
$$\Lambda_{-s}:=\left\{(q,r)\ \text{is}\ \dot{H}^{-s}\text{-admissible};\Big(\frac{10}{5-2s}\Big)^+\leq r\leq \Big(\frac{10}{3}\Big)^-\right\},$$
and let $\Lambda_0$ denote the $L^2$-admissible pairs. Here, $a^-$ is a fixed number slightly smaller than $a$ ($a^-=a-\epsilon$, where $\epsilon$ is small enough). $a^+$ can be defined by the similar way.

Next, we define the following Strichartz norm
$$\|{\b{\rm u}}\|_{S(\dot{\rm H}^s,I)}=\sup_{(q,r)\in \Lambda_s}\|{\b{\rm u}}\|_{L_t^q{\rm L}_x^r(I)}$$
and dual Strichartz norm
$$\|{\b{\rm u}}\|_{S^{'}(\dot{\rm H}^{-s},I)}=\inf_{(q,r)\in \Lambda_{-s}}\|{\b{\rm u}}\|_{L_t^{q^{'}}{\rm L}_x^{r^{'}}(I)},$$
where $q'$ denotes the dual exponent to $q$, i.e. the solution to $\tfrac1q+\frac1{q'}=1.$
 If $I=\R$, $I$ is omitted usually.

Now, let us recall the Strichartz estimates for free Schr\"odinger system, see \cite{CK, Fo, Ginibre1992, Keel1998, Strichartz1977} for more details.
\begin{lemma}\label{strichartz}
  Let $0\in I\subset\R$. The following statements hold
  \begin{enumerate}
    \item[$(i)$]$($Linear estimate$)$
    $$\|\mathcal{S}(t)\b{\rm g}\|_{S(\dot{\rm H}^s)}\leq C\|\b{\rm g}\|_{\dot{\rm H}^s};$$
    \item[$(ii)$]$($Nonlinear estimate I$)$
  $$\left\|\int_{0}^{t}\mathcal{S}(t-s)\b{\rm g}(\cdot,s)ds\right\|_{S({\rm L}^2,I)}\leq C\|\b{\rm g}\|_{S^{'}({\rm L}^2,I)};$$
    \item[$(iii)$]$($Nonlinear estimate II$)$
  $$\left\|\int_{0}^{t}\mathcal{S}(t-s) \b{\rm g}(\cdot,s)ds\right\|_{S(\dot{\rm H}^s,I)}\leq C\|\b{\rm g}\|_{S^{'}(\dot{\rm H}^{-s},I)}.$$

  \end{enumerate}
\end{lemma}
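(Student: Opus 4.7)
The plan is to reduce Lemma \ref{strichartz} to the classical scalar Strichartz estimates, exploiting the fact that the propagator $\mathcal{S}(t)$ is \emph{diagonal} in the sense that it acts componentwise on $\b{\rm g} = (g^1,g^2,g^3)^T$ via $e^{i\kappa_j t\Delta}g^j$, $j=1,2,3$. Since the $S(\dot{\rm H}^s)$ and dual norms on the system are defined componentwise (thanks to the conventions ${\rm L}^q=L^q\times L^q\times L^q$ and ${\rm H}^s=H^s\times H^s\times H^s$), every estimate for the system will follow from the corresponding scalar estimate for each $e^{i\kappa_j t\Delta}$ applied to $g^j$, after summing in $j$. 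So the heart of the matter is the well-known scalar theory; I only need to justify that the positive constants $\kappa_j$ do not spoil anything.

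First I would recall the scalar dispersive estimate: for each $\kappa_j>0$, a change of variable $t\mapsto \kappa_j t$ yields
\[
\bigl\|e^{i\kappa_j t\Delta} f\bigr\|_{L^\infty_x(\R^5)} \lesssim (\kappa_j|t|)^{-5/2}\|f\|_{L^1_x(\R^5)},
\]
with the implicit constant independent of $\kappa_j$. Interpolating with the unitarity of $e^{i\kappa_j t\Delta}$ on $L^2_x$ gives the full family of scalar dispersive estimates, and then the Keel--Tao $TT^*$ argument (see \cite{Keel1998}) produces (i) at $s=0$, i.e. the linear Strichartz estimate for every $L^2$-admissible pair, with constant depending on $\kappa_j$ (hence on $\kappa_1,\kappa_2,\kappa_3$) but uniform in $t$. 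Summing the three component bounds yields (i) for $s=0$. For general $s\in(0,1)$, I would use that $(-\Delta)^{s/2}$ commutes with $e^{i\kappa_j t\Delta}$ and apply the $L^2$-admissible estimate to $(-\Delta)^{s/2}g^j$; the scaling identity $\tfrac{2}{q}=\tfrac{5}{2}-\tfrac{5}{r}-s$ defining $\Lambda_s$ is exactly the one that makes $\|(-\Delta)^{s/2}\cdot\|_{L^q_tL^r_x}$ agree with the $L^2$-admissible scaling applied at regularity $s$, which yields (i) in full generality.

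For (ii), I would use the standard inhomogeneous Strichartz bound componentwise: the scalar statement $\bigl\|\int_0^t e^{i\kappa_j(t-s)\Delta}F(s)\,ds\bigr\|_{L^q_tL^r_x}\lesssim \|F\|_{L^{\tilde q'}_tL^{\tilde r'}_x}$ for any two $L^2$-admissible pairs $(q,r)$ and $(\tilde q,\tilde r)$ (including the endpoint $(2,10/3)$ in dimension $5$, again by \cite{Keel1998}) gives exactly $\|\cdot\|_{S({\rm L}^2,I)}\lesssim \|\cdot\|_{S'({\rm L}^2,I)}$ after taking sup in $(q,r)$ and inf in $(\tilde q,\tilde r)$, and summing over the three components. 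For (iii), which involves the restricted families $\Lambda_s, \Lambda_{-s}$, I would invoke the Foschi/Vilela inhomogeneous Strichartz estimate (see \cite{Fo}) at non-$L^2$ regularities; the admissibility condition $\frac{2}{q}=\frac{5}{2}-\frac{5}{r}-s$ and its negative-regularity counterpart are precisely the scaling conditions required for that theorem, and the extra range restriction $(\tfrac{10}{5-2s})^+\le r\le(\tfrac{10}{3})^-$ is imposed to stay strictly away from the endpoints where Foschi's estimate is known.

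The statements are classical and the ``hard part'' is not any single step but the bookkeeping: one must verify that every $(q,r)\in\Lambda_s$ produces a legitimate scalar Strichartz pair after the scaling $t\mapsto\kappa_j t$, and that the constants extracted from the three separate scalar estimates can be absorbed into a single constant $C=C(\kappa_1,\kappa_2,\kappa_3)$. Once this is observed, Lemma \ref{strichartz} follows directly by applying (i)--(iii) of the scalar theory to each component $g^j$ (or $F^j$) and combining the three bounds, so I would present the argument as a brief reduction rather than re-deriving the scalar estimates from scratch.
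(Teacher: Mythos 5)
Your proposal is correct and is essentially the same route the paper takes: the paper does not prove Lemma \ref{strichartz} at all but simply cites the classical scalar Strichartz theory (Strichartz, Ginibre--Velo, Keel--Tao, Foschi, Christ--Kiselev), and your reduction to the scalar case by diagonality of $\mathcal{S}(t)$, followed by an invocation of those same references, is exactly the intended justification. The only substantive observations you add beyond the paper's citation are (a) that the dispersive constant is uniform in $\kappa_j$ after rescaling $t\mapsto\kappa_j t$, and (b) that the range restriction on $r$ in $\Lambda_s,\Lambda_{-s}$ keeps one away from the endpoints of Foschi's inhomogeneous estimate; both points are accurate and fill in detail the paper leaves implicit.
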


Using lemma \ref{strichartz}  and combining with standard contraction mapping theorem (see \cite{Ca}), we can obtain the local well-posedness theory for \eqref{NLS system}.
\begin{theorem}[Well-posedness]\label{well-posed}
  Let ${\b{\rm u}}_0\in \rm{H}^1$. Then there exists $T>0$ and a unique solution to \eqref{NLS system} satisfying
  $$\b{\rm u}\in C([0,T], {\rm{H}}_x^1(\R^5))\cap L_{\text{loc}}^q((0,T), {\rm{W}}_x^{1,r}(\R^5))).$$
  In particular, if $\b{\rm u}$ is bounded in $\rm{H}^1$ unniformly in $t$, then the solution $\b{\rm u}$ is global.
\end{theorem}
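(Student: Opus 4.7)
The plan is to solve \eqref{NLS system} via Banach's fixed point theorem applied to the Duhamel operator
\[
\Phi(\b{\rm u})(t) := \mathcal{S}(t)\b{\rm u}_0 + i\int_0^t \mathcal{S}(t-s)\b{\rm f}(\b{\rm u}(s))\,ds
\]
on a closed ball inside the space $X_T := C([0,T]; {\rm H}^1_x(\mathbb{R}^5)) \cap L^q_t([0,T]; {\rm W}^{1,r}_x(\mathbb{R}^5))$ equipped with the natural Strichartz metric, where $(q,r)$ is an appropriately chosen $L^2$-admissible pair. The ball radius is taken proportional to $C\|\b{\rm u}_0\|_{{\rm H}^1}$, with $C$ the Strichartz constant from Lemma \ref{strichartz}.

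The central ingredient is a bilinear estimate that controls the quadratic nonlinearity $\b{\rm f}(\b{\rm u})$, whose components are products of the form $\overline{u^i}u^j$ or $u^i u^j$, in a dual Strichartz norm. After commuting one derivative into the nonlinearity via the product rule and applying H\"older in $(t,x)$, one uses the Sobolev embeddings ${\rm H}^1(\mathbb{R}^5) \hookrightarrow {\rm L}^p(\mathbb{R}^5)$ for $2 \leq p \leq 10/3$ to bound each factor. The energy-subcriticality of the problem in dimension five (where $s_c = 1/2 < 1$) produces a surplus positive power $T^{\theta}$, yielding an estimate of the schematic form
\[
\Bigl\|\int_0^t \mathcal{S}(t-s)\b{\rm f}(\b{\rm u})\,ds\Bigr\|_{X_T} \leq C\, T^{\theta}\, \|\b{\rm u}\|_{X_T}^2,
\]
together with an analogous bound for the difference $\b{\rm f}(\b{\rm u}) - \b{\rm f}(\b{\rm v})$. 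Choosing $T = T(\|\b{\rm u}_0\|_{{\rm H}^1})$ small enough then makes $\Phi$ both a self-map of the ball and a strict contraction, so the fixed point exists and is unique in $X_T$.

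The main technical obstacle, though essentially bookkeeping, is selecting the admissible pair $(q,r)$ so that after one application of H\"older the product $\nabla(u^i u^j)$ lies in a dual admissible space with each factor controlled by ${\rm H}^1$ through Sobolev with a little room to spare; this is precisely where the assumption $d = 5$ (rather than $d = 6$) pays off by producing a positive power of $T$. Uniqueness of the fixed point together with the Strichartz persistence of regularity from Lemma \ref{strichartz} yield the continuity $\b{\rm u} \in C([0,T]; {\rm H}^1)$ and the local-in-time $L^q_t {\rm W}^{1,r}_x$ bound asserted in the theorem.

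Finally, the global statement follows from the standard blow-up alternative: since the local existence time $T$ above depends only on $\|\b{\rm u}(t_0)\|_{{\rm H}^1}$, if the maximal forward time of existence $T_{\max}$ were finite while $\sup_{t<T_{\max}} \|\b{\rm u}(t)\|_{{\rm H}^1} < \infty$, one could restart the local theory at any $t_0 < T_{\max}$ close enough to $T_{\max}$ and extend the solution past $T_{\max}$, a contradiction. Hence a uniform ${\rm H}^1$-bound forces $T_{\max} = +\infty$, and the same argument in the backward direction completes the global statement.
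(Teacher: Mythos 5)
Your proposal is correct and it is essentially the same approach the paper takes: the paper simply cites Lemma \ref{strichartz} together with the standard contraction-mapping argument of Cazenave \cite{Ca}, which is precisely the Strichartz-space fixed-point scheme you outline, including the use of energy-subcriticality ($s_c = 1/2$ in $d=5$) to extract a positive power of $T$ and the blow-up alternative to pass from a uniform $\mathrm{H}^1$ bound to global existence.
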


\section{Variational characterization}
In this section, we are in the position to prove the variational characterization for the sharp Gargliardo-Nirenberg inequality.
Firstly, we  will show the existence of the ground state.
As a consequence, we can obtain the sharp Gargliardo-Nirenberg inequality.
 And we use the properties of the ground state to establish the coercivity condition which will be used in  the proof of Morawetz estimate in the following section. %\eqref{Me}.

First, we study the existence of ground state following the argument of Weinstein in \cite{Wein1982}. For other results, please see \cite{HOT}.
\begin{proposition}
[Existence of ground state]\label{ground state}
The minimal $J_{\min}$ of the nonnegative funtional
\[
    J(\b{\rm u}) : = (M(\b{\rm u}))^{\frac{1}{2}}(K(\b{\rm u}))^{\frac{5}{2}}(V(\b{\rm u}))^{-2},  \qquad  \b{\rm u} \in {\rm H}^{1}(\mathbb R^{5}) \setminus \{{\b 0}\}
\]
are attained at $ {\b{\rm u}} = (u^1, u^2, u^3)^T $
whose expression has to be in the form of $ (u^1, u^2, u^3)^T\\
= (e^{i\theta_1} m \phi_1(nx), e^{i\theta_2} m \phi_2(nx), e^{i\theta_3} m\phi_3(nx))^T $,
where $ m, n > 0 $, $ \theta_1, \theta_2, \theta_3 \in \mathbb R $,
and $ \b{\rm Q} = (\phi_1, \phi_2, \phi_3)^T \neq {\b 0} $ is the non-negative radial solution of the elliptic equation system:
\[
\left\{
\begin{aligned} \label{GS}
& \phi_1- \kappa_1\Delta \phi_1 = \phi_2 \phi_3, \\
& \phi_2- \kappa_2\Delta \phi_2 = \phi_1 \phi_3, \\
& 2\phi_3- \kappa_3\Delta \phi_3 = \phi_1 \phi_2
\end{aligned}
\right.
\qquad x \in \mathbb{R}^5.
\]
where the $ {\b{\rm Q}} $ is the {\bf ground state} with $ J({\b{\rm Q}}) = J_{\min} $. The sets of all ground states are denoted as $ \mathcal{G} $.
All ground states share the same mass, denoted as $ M_{gs} $.
\end{proposition}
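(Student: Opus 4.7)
The plan is to follow the classical Weinstein-style variational argument. First I would record the invariances of $J$: besides translation and component-wise phase rotation, a direct computation under the two-parameter scaling $\b{\rm u}(x) \mapsto m\, \b{\rm u}(n x)$ gives $M \mapsto m^{2} n^{-5} M$, $K \mapsto m^{2} n^{-3} K$ and $V \mapsto m^{3} n^{-5} V$, so that $J$ is scale-invariant. These invariances let me normalize a minimizing sequence to have $M(\b{\rm u}_n) = K(\b{\rm u}_n) = 1$, in which case $V(\b{\rm u}_n)^{2} \to 1/J_{\min}$.

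Next I would perform a symmetric-decreasing rearrangement. For any $\b{\rm u}$ with $V(\b{\rm u}) \neq 0$, put $\b{\rm u}^{*} = (|u^1|^{*}, |u^2|^{*}, |u^3|^{*})$. Then $M$ is preserved, $K$ is non-increased by P\'olya--Szeg\H{o}, and
\[
V(\b{\rm u}) \leq \int_{\mathbb R^{5}} |u^1|\,|u^2|\,|u^3|\, {\rm d}x \leq \int_{\mathbb R^{5}} |u^1|^{*} |u^2|^{*} |u^3|^{*}\, {\rm d}x = V(\b{\rm u}^{*})
\]
by the Brascamp--Lieb--Luttinger inequality for triple products, so $J(\b{\rm u}^{*}) \le J(\b{\rm u})$. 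I may therefore assume the minimizing sequence consists of radial, real, non-negative functions.

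Now the compactness step. The normalized sequence is bounded in ${\rm H}^{1}$, so a subsequence converges weakly in ${\rm H}^{1}$ to some $\b{\rm Q}$. Crucially, by the Strauss radial compact embedding $H^{1}_{\rm rad}(\mathbb R^{5}) \hookrightarrow L^{3}(\mathbb R^{5})$ (valid since $3 < 10/3$), the convergence is strong in $L^{3}$, and an elementary H\"older estimate then yields $V(\b{\rm u}_n) \to V(\b{\rm Q})$. Hence $\b{\rm Q} \neq 0$. Weak lower semicontinuity gives $M(\b{\rm Q}) \le 1$ and $K(\b{\rm Q}) \le 1$, forcing $J(\b{\rm Q}) \le J_{\min}$; equality then upgrades these to $M(\b{\rm Q}) = K(\b{\rm Q}) = 1$ and promotes the convergence to strong convergence in ${\rm H}^{1}$.

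Finally I would compute the Euler--Lagrange equations by varying each component of $J$ with $u^{i}$ and $\overline{u^{i}}$ treated as independent; the condition $\delta J/\delta \overline{u^{i}} = 0$ yields, after clearing common factors, an elliptic system of the form $c_{1}\phi_{i} - c_{2}\kappa_{i}\Delta \phi_{i} = c_{3}\phi_{j}\phi_{k}$ for $i=1,2$ and $2 c_{1}\phi_{3} - c_{2}\kappa_{3}\Delta \phi_{3} = c_{3}\phi_{1}\phi_{2}$ (the factor $2$ reflecting the coefficient $1$ rather than $\tfrac12$ in the $u^{3}$-contribution to $M$). Choosing $n = \sqrt{c_{2}/c_{1}}$ and $m = c_{1}/c_{3}$ in the rescaling $\b{\rm Q}(x) \mapsto m \b{\rm Q}(n x)$ produces exactly \eqref{GS}. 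To upgrade this to a characterization of \emph{all} minimizers as $(e^{i\theta_{j}} m \phi_{j}(nx))$, I would combine the equality cases in P\'olya--Szeg\H{o} (forcing each $|u^{i}|$ to be a translate of a radially symmetric decreasing function with the same common centre for the triple integral in $V$ to remain saturated) with the constant-phase rigidity coming from $V(\b{\rm u}) = \int |u^{1}||u^{2}||u^{3}|\,{\rm d}x$. The main obstacle is precisely the compactness step: without the radial reduction, translation invariance would let a minimizing sequence escape to spatial infinity, and the Brascamp--Lieb--Luttinger reduction to radial non-negative functions, coupled with Strauss compactness on ${\rm H}^{1}_{\rm rad}$, is what closes this gap.
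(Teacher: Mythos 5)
Your proposal follows essentially the same Weinstein-style variational route as the paper: normalize a minimizing sequence so that $M=K=1$ via the two-parameter scaling, pass to non-negative radial representatives by symmetric-decreasing rearrangement (the paper cites the ``Schwartz symmetrical rearrangement lemma''; you make the triple-product step explicit with Brascamp--Lieb--Luttinger, which is the right reference), extract a strong $L^3$ limit via the Strauss compact embedding $H^1_{\rm rad}(\R^5)\hookrightarrow L^3(\R^5)$, use weak lower semicontinuity of $M,K$ to conclude the limit is a minimizer, and then derive the Euler--Lagrange system and rescale to reach \eqref{GS}, correctly accounting for the factor $2$ in the $\phi_3$-equation coming from the weight on $u^3$ in $M$.

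The one place where you deviate from the paper is the final characterization of \emph{all} minimizers. You propose to invoke the equality cases of P\'olya--Szeg\H{o} together with a translation/phase rigidity argument; the paper instead writes any putative minimizer as $\Phi_i(x)=e^{i\theta_i(x)}m\phi_i(nx)$ with real-valued $\theta_i$, expands $|\nabla\Phi_i|^2 = |mn\nabla\phi_i(nx)|^2 + |\nabla\theta_i\, m\phi_i(nx)|^2$, and uses $K(\mathbf{W})=K(m\mathbf{Q}(n\cdot))$ to force $\nabla\theta_i\equiv 0$. Both are legitimate, and both actually share the same unstated premise (that the modulus of a minimizer agrees with a dilate of $\mathbf{Q}$), so neither is more rigorous on this point; the paper's route is more elementary since it avoids invoking the delicate equality cases of rearrangement inequalities. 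One small precision worth adding to your write-up: since $V$ is real but not sign-definite, the rearrangement step should read $|V(\b{\rm u})|\le \int |u^1|^*|u^2|^*|u^3|^*\,{\rm d}x = V(\b{\rm u}^*)$ so that the conclusion $J(\b{\rm u}^*)\le J(\b{\rm u})$ holds (recall $J$ depends on $V$ only through $V^{-2}$).
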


\begin{proof}
	Suppose that the non-zero function sequence $\{\mathbf{u}_n\}$ is the minimal sequence of the functional $J$, i.e.
	$$
	\lim_{n\to\infty} J(\mathbf{u}_n)=\inf\{J(\mathbf{u}): \mathbf{u}\in {\rm H}^1(\R^5)\backslash\{\mathbf{0}\}\}.
	$$
	By the Schwartz symmetrical rearrangement lemma, without loss of generality, we assume $\mathbf{u}_n$ are non-negative radial.
	
	\textbf{Step.1} Note that for any $\mathbf{u}\in {\rm H}^1(\R^5)\backslash\{\mathbf{0}\}, \mu,\nu>0$, we have
	\begin{equation}\label{mhr}
	\left\{
	\begin{aligned}
	&M(\mu\mathbf{u}(\nu\cdot))=\mu^2\nu^{-5}M(\mathbf{u}), \\
	&K(\mu\mathbf{u}(\nu\cdot))=\mu^2\nu^{2-5}K(\mathbf{u}), \\
	&V(\mu\mathbf{u}(\nu\cdot))=\mu^3\nu^{-5}V(\mathbf{u}).
	\end{aligned}
	\right.
	\end{equation}
	By the defination of $J(\mathbf{u})$, it is easy to see $J(\mu\mathbf{u}(\nu\cdot))=J(\mathbf{u})$. Denote
	$$
	\mathbf{v}_n(x)=\frac{[M(\mathbf{u}_n)]^{\frac{5-2}{4}}}{[K(\mathbf{u}_n)]^{\frac 54}}\mathbf{u}_n\left( \left(\frac{M(\mathbf{u}_n)}{K(\mathbf{u}_n)} \right) ^{\frac12}x\right).
	$$
	On one hand, the mass and kinetic energy of $\mathbf{v}_n$ have been unitized, i.e.
	$$
	M(\mathbf{v}_n)=K(\mathbf{v}_n)\equiv 1.
	$$
	On the other hand, $ \mathbf{v}_n $ also retain non-negative radial at the same tine and satisfies
	$ J(\mathbf{v}_n)=J(\mathbf{u}_n) $, which implies
	$$
	\lim_{n\to\infty} J(\mathbf{v}_n)=\inf\{J(\mathbf{u}): \mathbf{u} \in {\rm H}^1(\R^5) \backslash \{\mathbf{0}\} \}.
	$$
	
	We note that $ \{ \mathbf{v}_n \}_{n=1}^\infty \in {\rm H}^1_{\rm rad}(\R^5) $ is bounded and the embedding $ H^1_{\rm rad}(\R^5)\to L^3(\R^5)$ is compact,
	then there exists a subsequence $ \{ \mathbf{v}_{n_k} \}_{k=1}^\infty \subset \{ \mathbf{v}_n \}_{n=1}^\infty $ and $\mathbf{v}^{\star}\in {\rm H}^1_{\rm rad} $, such that as $k\to \infty$, we have $\mathbf{v}_{n_k}\rightharpoonup\mathbf{v}^{\star}$ in ${\rm H}^1_{\rm rad}(\R^5)$ and $\mathbf{v}_{n_k}\to\mathbf{v}^{\star}$ in ${\rm L}^3(\R^5)$.
	
	By thw weak low semi-continuity of the functional $M$ and $K$, we have
	$$
	M(\mathbf{v}^{\star}), K(\mathbf{v}^{\star})\leq 1.
	$$
	Then using triangle inequality, we obtain
	$$
	\begin{aligned}
	V(\mathbf{v}_{n_k})&=\|\mathbf{v}_{n_k}\|_{{\rm L}^3(\R^5)}^3\\
	&\leq\left( \|\mathbf{v}_{n_k}-\mathbf{v}^{\star}\|_{{\rm L}^3(\R^5)}+\|\mathbf{v}^{\star}\|_{{\rm L}^3(\R^5)}\right)^3\\
	&\longrightarrow\|\mathbf{v}^{\star}\|_{{\rm L}^3(\R^5)}^3=V(\mathbf{v}^{\star}).
	\end{aligned}
	$$
Thus, we have that
$$
\begin{aligned}
J(\mathbf{v}^{\star})=&[M(\mathbf{v}^{\star})][K(\mathbf{v}^{\star})]^5[V(\mathbf{v}^{\star})]^{-4}
\leq\lim_{k\to\infty}[V(\mathbf{v}_{n_k})]^{-4}\\
=&\lim_{k\to\infty}J(\mathbf{v}_{n_k})=\inf\{J(\mathbf{u}): \mathbf{u} \in {\rm H}^1(\R^5) \backslash \{\mathbf{0}\} \},
\end{aligned}
$$
which shows that the minimal of $ J $ attains at $ \mathbf{v}^\star $. 	

{\bf Step 2.} We consider the variational derivatives of $M, K, V$: fix $\mathbf{u}=(u^1, u^2, u^3)^T\in {\rm H}^1(\R^5) \backslash \{\mathbf{0}\}$, for any $\mathbf{w}=(w_1, w_2, w_3 )^T\in {\rm H}^1(\R^5)$,
\begin{align}
\left.\frac{{\rm d}}{{\rm d}h}\right|_{h=0}M(\mathbf{u}+h\mathbf{w})=&\Re\int_{\R^5} u^1\overline{w_1}{\rm d}x+\Re\int_{\R^5}u^2\overline{w_2} {\rm d}x +\Re\int_{\R^5} 2u^3\overline{w_3}{\rm d}x \nonumber\\
=&\Re\int_{\R^5}(u^1, u^2, 2u^3 )\cdot\overline{(w_1, w_2, w_3 )}{\rm d}x\\
\left.\frac{{\rm d}}{{\rm d}h}\right|_{h=0}K(\mathbf{u}+h\mathbf{w})=&\Re\int_{\R^5}-\kappa_1\Delta u^1\overline{w_1}{\rm d}x+\Re\int_{\R^5} -\kappa_2\Delta u^2 \overline{w_2}{\rm d}x \\
&+\Re\int_{\R^5} -\kappa_3\Delta u^3 \overline{w_3}{\rm d}x \nonumber\\
=&\Re\int_{\R^5}(-\kappa_1 \Delta u^1, -\kappa_2 \Delta u^2, -\kappa_3 \Delta u^3)\cdot\overline{(w_1,w_2, w_3)}{\rm d}x\\
\left.\frac{{\rm d}}{{\rm d}h}\right|_{h=0}V(\mathbf{u}+h\mathbf{w})=&\Re\int_{\R^5}\bar{u^2}u^3\overline{w_1}{\rm d}x+\Re\int_{\R^5} \bar u^1 u^3 \overline{w_2}{\rm d}x+ Re\int_{\R^5}u^1 u^2 \overline{w_3}{\rm d}x\nonumber\\
=&\Re\int_{\R^5}(\bar u^2 u^3, \bar u^1 u^3, u^1 u^2)\cdot\overline{(w_1,w_2, w_3)}{\rm d}x.
\end{align}
If the functional $J$ attains the minimum at $\mathbf{W}=(\Phi_1,\Phi_2, \Phi_3)^T$, then we have for any $\mathbf{w}=(w_1,w_2, w_3)^T\in {\rm H}^1(\R^5)$,
$$
\left.\frac{{\rm d}}{{\rm d}h}\right|_{h=0}J(\mathbf{W}+h\mathbf{w})=0.
$$	
It means that
$$
\begin{aligned}
0=&[M(\Phi_1,\Phi_2, \Phi_3)]^{-1}\Re\int_{\R^5}(\Phi_1,\Phi_2, 2\Phi_3) \cdot\overline{(w_1,w_2, w_3)}{\rm d}x\\
&+5[K(\Phi_1,\Phi_2, \Phi_3)]^{-1}\Re\int_{\R^5}(-\kappa_1 \Delta\Phi_1,-\kappa2 \Delta\Phi_2, -\kappa_3 \Delta \Phi_3 )\cdot\overline{(w_1, w_2, w_3)}{\rm d}x\\
&-4[V(\Phi_1,\Phi_2, \Phi_3)]^{-1} \Re\int_{\R^5}(\Phi_2 \Phi_3, \Phi_1 \Phi_3, \Phi_1\Phi_2)\cdot\overline{(w_1,w_2, w_3)}{\rm d}x.
\end{aligned}
$$

By the arbitrariness of $ \mathbf{w} = (w_1, w_2, w_3)^T $, we have
$$
\left\{
\begin{aligned}
&[M(\Phi_1,\Phi_2, \Phi_3)]^{-1}\Phi_1- 5[K(\Phi_1,\Phi_2,\Phi_3)]^{-1} \kappa_1 \Delta\Phi_1-4[V(\Phi_1,\Phi_2, \Phi_3)]^{-1} \Phi_2 \Phi_3=0, \\
&[M(\Phi_1,\Phi_2,\Phi_3)]^{-1}\Phi_2-5[K(\Phi_1,\Phi_2, \Phi_3 )]^{-1}\kappa_2 \Delta\Phi_2 -4[V(\Phi_1,\Phi_2, \Phi_3 )]^{-1}\Phi_1\Phi_3=0,\\
&[M(\Phi_1,\Phi_2,\Phi_3)]^{-1}2\Phi_3-5[K(\Phi_1,\Phi_2, \Phi_3 )]^{-1}\kappa_3 \Delta\Phi_3 -4[V(\Phi_1,\Phi_2, \Phi_3 )]^{-1}\Phi_1\Phi_2=0.
\end{aligned}
\right.
$$
The above is equivalent to
$$
\left\{
\begin{aligned}
&\alpha \Phi_1- \kappa_1 \Delta\Phi_1= \beta\Phi_2\Phi_3, \\
&\alpha \Phi_2 -\kappa_2\Delta\Phi_2 =\beta\Phi_1\Phi_3,\\
& \alpha \Phi_3- \kappa_3 \Delta \Phi_3 =\beta\Phi_1\Phi_2
\end{aligned}
\right.
$$
where
$$
\alpha = \frac{[K(\Phi_1,\Phi_2,\Phi_3)]}{5[M(\Phi_1,\Phi_2, \Phi_3)]},
\beta = \frac{4[K(\Phi_1,\Phi_2,\Phi_3)]}{5[V(\Phi_1,\Phi_2, \Phi_3)]}.
$$
 Set $ \mathbf{W}(x) = \frac{\alpha}{\beta} \mathbf{Q}(\sqrt{\alpha} x) $ i.e. $ (\Phi_1(x), \Phi_2(x), \Phi_3(x)) = (\frac{\alpha}{\beta} \phi_1(\sqrt{\alpha} x), \frac{\alpha}{\beta} \phi_2 (\sqrt{\alpha} x), \frac{\alpha}{\beta} \phi_3 (\sqrt{\alpha} x) ) $, we obtain that
$$
\left\{
\begin{aligned}
&\phi_1- \kappa_1 \Delta\phi_1 =\phi_2\phi_3, \\
&\phi_2 -\kappa_2 \Delta\phi_2 =\phi_1\phi_3,\\
&2\phi_3 -\kappa_3 \Delta \phi_3 = \phi_1 \phi_2.
\end{aligned}
\right.
$$
The non-negative radial solutions of the above equation are the ground states. And let $\mathcal{G}$ denote the set of all ground states.

{\bf Step 3.} We show that the ground state has to be radial.
We note the fact that the radiality of $ \mathbf{Q} $ and $ \mathbf{W} $ are equivalent.
If $\mathbf{W}$ is non-radial, then by the Schwartz symmetrical rearrangement lemma, $M(\mathbf{W}^{\star})=M(\mathbf{W})$,  $K(\mathbf{W}^{\star})<K(\mathbf{W})$,  and $V(\mathbf{W}^{\star})\geq  V(\mathbf{W})$, so we have $J(\mathbf{W}^{\star})<J(\mathbf{W})$, which is contradict to the minimality of $\mathbf{W}$.
Thus $\mathbf{W}$ is radial and so is $ \mathbf{Q} $.

{\bf Step 4.} We show that if $\mathbf{W}=(\Phi_1,\Phi_2, \Phi_3)^T$ is the minimal element, then there exists three constants $\theta_1, \theta_2, \theta_3\in\R$ such that $(\Phi_1(x),\Phi_2(x),\Phi_3(x))^T=(e^{i\theta_1}m\phi_1(nx),
e^{i\theta_2}m\phi_2(nx),$ $e^{i\theta_3}m\phi_3(nx) )^T$ with $m=\frac{\alpha}{\beta},n=\sqrt{\alpha}$.

Since $J(m\phi_1(nx), m\phi_2(nx), m\phi_3(nx))\leq J(\Phi_1(x),\Phi_2(x), \Phi_3(x)) $, it is easy to find $ (\phi_1,\phi_2, \phi_3)^T$ is also a minimal element. Suppose that $(\Phi_1(x),\Phi_2(x),\Phi_3(x))=(e^{i\theta_1(x)}m\phi_1(nx),e^{i\theta_2(x)}m\phi_2(nx)$, $e^{i\theta_3(x)}m\phi_3(nx) )$, where $\theta_1(x),\theta_2(x),
\theta_3(x)$ are real-valued functions, then
$$
\begin{aligned}
|\nabla\Phi_1(x)|^2=&|\nabla e^{i\theta_1(x)}m\phi_1(nx)|^2\\
=&|i\nabla\theta_1(x)e^{i\theta_1(x)}m\phi_1(nx)+e^{i\theta_1(x)}mn\nabla \phi_1(nx)|^2\\
=&|i\nabla\theta_1(x)m\phi_1(nx)+mn\nabla \phi_1(nx)|^2\\
=&|mn\nabla\phi_1(nx)|^2+|\nabla\theta_1(x)m\phi_1(nx)|^2,
\end{aligned}
$$

$$
\begin{aligned}
|\nabla\Phi_2(x)|^2=&|\nabla e^{i\theta_2(x)}m\phi_2(nx)|^2\\
=&|i\nabla\theta_2(x)e^{i\theta_2(x)}m\phi_2(nx)+e^{i\theta_2(x)}mn\nabla\phi_2(nx)|^2\\
=&|i\nabla\theta_2(x)m\phi_2(nx)+mn\nabla \phi_2(nx)|^2\\
=&|mn\nabla\phi_2(nx)|^2+|\nabla\theta_2(x)m\phi_2(nx)|^2,
\end{aligned}
$$
and
$$
\begin{aligned}
	|\nabla\Phi_3(x)|^2=&|\nabla e^{i\theta_3(x)}m\phi_3(nx)|^2\\
	=&|i\nabla\theta_3(x)e^{i\theta_3(x)}m\phi_3(nx)+e^{i\theta_3(x)}mn\nabla\phi_3(nx)|^2\\
	=&|i\nabla\theta_3(x)m\phi_3(nx)+mn\nabla \phi_3(nx)|^2\\
	=&|mn\nabla\phi_3(nx)|^2+|\nabla\theta_3(x)m\phi_3(nx)|^2.
\end{aligned}
$$

By the minimality of $J(\Phi_1(x),\Phi_2(x), \Phi_3(x) ), J(\Phi_1(x),\Phi_2(x), \Phi_3(x))=J(m\phi_1(nx)$, $m\phi_2(nx), m\phi_3(nx))$. But by $M(\Phi_1(x),\Phi_2(x), \Phi_3)=M(m\phi_1(nx),m\phi_2(nx),m\phi_3(nx))$ and $V(\Phi_1(x),\Phi_2(x), \Phi_3(x))=V(m\phi_1(nx),m\phi_2(nx), m\phi_3(nx))$, so we have $K(\Phi_1(x)$,
$\Phi_2(x),\Phi_3(x))$ = $K(m\phi_1(nx),m\phi_2(nx), m\phi_3(nx))$. Then $\nabla\theta_1(x)=\nabla\theta_2(x)=\nabla\theta_3(x)=0$, thus $\theta_1(x),\theta_2(x), \theta_3(x)$ are both constants.
Therefore,
\begin{equation}
(\Phi_1(x),\Phi_2(x), \Phi_3(x))= (e^{i\theta_1}m\phi_1(nx),e^{i\theta_2}m\phi_2(nx), e^{i\theta_3}m\phi_3(nx)),
\label{W}
\end{equation}
where $m,n>0, \theta_1,\theta_2, \theta_3\in\R$ and $(\phi_1,\phi_2,\phi_3)^T\neq \b{\rm 0} $ is the non-negative non-zero radial solution to \eqref{GS}.
\end{proof}

Similarly to the idea in our previous work \cite{Meng2020}, we can also compute the ratio of $M(\b{\rm Q})$, $K(\b{\rm Q})$ and $V(\b{\rm Q}) $, which is helpful for the optimal constant in Gargliardo-Nirenberg inequality.
\begin{remark}
For ground state $ \b{\rm Q} = (\phi_1, \phi_2, \phi_3)^T $, we have the following scaling identity
\begin{align*}
& M \left( \lambda^{\alpha} \b{\rm Q}(\lambda^{\beta}\cdot) \right) + E \left( \lambda^{\alpha} \b{\rm Q}(\lambda^{\beta}\cdot) \right) \\
= & \lambda^{2\alpha - 5\beta}M(\b{\rm Q}) + \lambda^{2\alpha - 3\beta}K(\b{\rm Q}) - \lambda^{3\alpha - 5\beta}V(\b{\rm Q}) \quad \forall \lambda \in (0, \infty).
\end{align*}
Using variational derivatives and letting  $ \lambda = 1 $ in both sides of above identity, we can obtain
\[
\begin{aligned}
0 = & \Re \int_{\mathbb{R}^5} \left(  \phi_1- \kappa_1\Delta \phi_1 - \phi_2 \phi_3 \right) \cdot \overline{ \left( \frac{\rm d}{{\rm d}\lambda} \Big|_{\lambda=1} \lambda^{\alpha} \phi_1(\lambda^{\beta}x) \right) } \\
& \quad \quad + \left(  \phi_2- \kappa_2\Delta \phi_2 - \phi_1 \phi_3 \right) \cdot \overline{ \left( \frac{\rm d}{{\rm d}\lambda} \Big|_{\lambda=1} \lambda^{\alpha} \phi_2(\lambda^{\beta}x) \right) }\\
&\left( 2 \phi_3- \kappa_3\Delta \phi_3 -\phi_1 \phi_2 \right) \cdot \overline{ \left( \frac{\rm d}{{\rm d}\lambda} \Big|_{\lambda=1} \lambda^{\alpha} \phi_3(\lambda^{\beta}x) \right)} {\rm d}x   \\
= & (2\alpha - 5\beta)M(\b{\rm Q}) + (2\alpha - 3\beta)K(\b{\rm Q}) - (3\alpha - 5\beta)V(\b{\rm Q}) \\
= & (2M + 2K - 3V)\alpha - (5M + 3K -5V)\beta, \quad \forall \alpha, \beta \in \mathbb{R}.
\end{aligned}
\]
This yields
\begin{equation}
M(\b{\rm Q}) : K(\b{\rm Q}) : V(\b{\rm Q}) = 1 : 5 : 4.
\label{Q}
\end{equation}
\end{remark}

Using the properties of the ground state, we can directly obtain the sharp Gagliardo--Nirenberg inequality.
\begin{corollary}[Gagliardo--Nirenberg inequality]
For any $\b{\rm u}\in {\rm H}^1\setminus\{\b{\rm 0}\}$, we have
\begin{equation}
V(\b{\rm u}) \leqslant C_{GN} [M(\b{\rm u})]^{\frac{1}{4}} [K(\b{\rm u})]^{\frac{5}{4}},
\label{GN-inequality}
\end{equation}
where $ C_{GN} = 4 \cdot 5^{-\frac{5}{4}} M_{gs}^{-\frac{1}{2}} $.
The equality holds if and only if $ \b{\rm u} \in {\rm H}^{1}(\mathbb R^{5}) \in \mathcal G  $.
\end{corollary}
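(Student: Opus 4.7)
The plan is to deduce the Gagliardo--Nirenberg inequality directly from Proposition \ref{ground state} together with the ratio identity \eqref{Q}, without any further variational work. The point is that the functional $J(\b{\rm u}) = M(\b{\rm u})^{1/2}K(\b{\rm u})^{5/2}V(\b{\rm u})^{-2}$ is scale invariant and bounded below by $J_{\min} = J(\b{\rm Q})$, so rearranging this inequality is exactly the content of \eqref{GN-inequality}. The only nontrivial point is to evaluate the constant $J_{\min}$ in closed form using \eqref{Q}.

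Concretely, I would first use \eqref{Q} to write $K(\b{\rm Q}) = 5 M(\b{\rm Q})$ and $V(\b{\rm Q}) = 4 M(\b{\rm Q})$, and recall that all ground states share the same mass $M(\b{\rm Q}) = M_{gs}$. Substituting into $J(\b{\rm Q})$ gives
\[
J_{\min} = M_{gs}^{1/2}\,(5 M_{gs})^{5/2}\,(4 M_{gs})^{-2} = \frac{5^{5/2}}{16}\, M_{gs}.
\]
For an arbitrary $\b{\rm u} \in {\rm H}^1(\mathbb{R}^5) \setminus \{\b{\rm 0}\}$, the inequality $J(\b{\rm u}) \geq J_{\min}$ from Proposition \ref{ground state} can be rewritten as $V(\b{\rm u})^2 \leq J_{\min}^{-1} M(\b{\rm u})^{1/2} K(\b{\rm u})^{5/2}$, and taking square roots yields \eqref{GN-inequality} with
\[
C_{GN} = J_{\min}^{-1/2} = \frac{4}{5^{5/4} M_{gs}^{1/2}},
\]
which matches the stated constant.

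For the equality case, $V(\b{\rm u}) = C_{GN} M(\b{\rm u})^{1/4} K(\b{\rm u})^{5/4}$ is precisely $J(\b{\rm u}) = J_{\min}$, so by Proposition \ref{ground state} this happens if and only if $\b{\rm u}$ lies in $\mathcal{G}$, up to the phase, scaling and dilation parameters already absorbed into the definition of the ground state set. Since everything reduces to plugging the ratio \eqref{Q} into the definition of $J_{\min}$ and inverting, there is no genuine obstacle here; the only thing to verify carefully is the arithmetic of the exponents so that the final constant comes out as $4 \cdot 5^{-5/4} M_{gs}^{-1/2}$ rather than something off by a power.
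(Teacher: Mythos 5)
Your proof is correct and is essentially the argument the paper intends (the paper says the corollary follows ``directly'' from Proposition \ref{ground state} and the ratio identity \eqref{Q} without spelling out the details, and you have simply filled those in). The computation $J_{\min} = 5^{5/2}M_{gs}/16$, the inversion $C_{GN}=J_{\min}^{-1/2}$, and the identification of the equality case all match.
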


Thanks to the sharp Gagliardo--Nirenberg, Pohozaev identities and conservation of mass and energy, we have the following coercivity. We will omit the proof since it is similar to the argument in \cite{Dodson2016,Dodson2018}
\begin{lemma}[Coercivity $I$]\label{coer1}
For the \eqref{NLS system} under the mass-resonance condition \eqref{mr}.
If $ M(\b{\rm u}_0) E(\b{\rm u}_0) \le (1 - \delta) M(\b{\rm Q}) E(\b{\rm Q}) $
and $ M(\b{\rm u}_0) K(\b{\rm u}_0) < M(\b{\rm Q}) K(\b{\rm Q}) $, then there exists $ \delta^\prime = \delta^\prime (\delta) > 0 $ so that
\begin{equation}
M(\b{\rm u}) K(\b{\rm u}) \le (1 - \delta^\prime) M(\b{\rm Q}) K(\b{\rm Q})
\label{coe1}
\end{equation}
for all $ t \in I $,  where $ \b{\rm u} : I \times \mathbb{R}^5 \to \mathbb{C}^3 $ is the maximal-lifespan solution to \eqref{NLS system}.
%In particular, $ I = \mathbb{R} $ and $ \b{\rm u} $ is uniformly bounded in $ {\rm H}^1(\mathbb{R}^5) $.
\end{lemma}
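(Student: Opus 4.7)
The plan is to run the by-now standard Kenig--Merle style concavity/continuity argument, working entirely with the single variable $y(t) := M(\b{\rm u}(t)) K(\b{\rm u}(t))$. First I would use conservation of mass and energy so that $M(\b{\rm u}(t)) = M(\b{\rm u}_0)$ and $E(\b{\rm u}(t)) = E(\b{\rm u}_0)$ for all $t \in I$. Multiplying the identity $E(\b{\rm u}) = K(\b{\rm u}) - V(\b{\rm u})$ by $M(\b{\rm u})$ and applying the sharp Gagliardo--Nirenberg inequality \eqref{GN-inequality} to the potential term gives
\[
M(\b{\rm u}(t)) E(\b{\rm u}(t)) \;\ge\; y(t) \;-\; C_{GN}\, y(t)^{5/4} \;=:\; f(y(t)).
\]

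Next I would analyze $f$ on $[0,\infty)$. From $f'(y) = 1 - \tfrac{5}{4} C_{GN} y^{1/4}$ one sees that $f$ is strictly increasing on $[0, y_\ast]$ with a unique maximum at the critical point $y_\ast$ satisfying $y_\ast^{1/4} = \tfrac{4}{5 C_{GN}}$. Using the ratio identity \eqref{Q}, namely $M(\b{\rm Q}):K(\b{\rm Q}):V(\b{\rm Q}) = 1:5:4$, together with the fact that the ground state saturates \eqref{GN-inequality}, I would verify that $y_\ast = M(\b{\rm Q}) K(\b{\rm Q})$ and $f(y_\ast) = \tfrac{1}{5} M(\b{\rm Q}) K(\b{\rm Q}) = M(\b{\rm Q}) E(\b{\rm Q})$. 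Thus the whole hypothesis gets repackaged as $f(y(t)) \le M(\b{\rm u}_0) E(\b{\rm u}_0) \le (1-\delta) f(y_\ast)$ and $y(0) < y_\ast$.

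Now the continuity/bootstrap step: since $\b{\rm u} \in C_t({\rm H}^1)$, $y(t)$ is continuous on $I$; if there were some $t_0 \in I$ with $y(t_0) = y_\ast$, taking $t_0$ to be the first such time gives $f(y(t_0)) = f(y_\ast) = M(\b{\rm Q}) E(\b{\rm Q})$, contradicting $f(y(t)) \le (1-\delta) M(\b{\rm Q}) E(\b{\rm Q})$. Hence $y(t) < y_\ast$ throughout $I$. On the compact interval $[0, y_\ast]$ the function $f$ is continuous and strictly increasing, so the inverse image of $[0, (1-\delta) f(y_\ast)]$ under $f$ is a set of the form $[0, (1-\delta') y_\ast]$ for some $\delta' = \delta'(\delta) > 0$; this furnishes the desired $\delta'$ and yields \eqref{coe1}.

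The computation is essentially routine once one identifies $y_\ast$ with $M(\b{\rm Q}) K(\b{\rm Q})$ and uses \eqref{Q} to evaluate $f(y_\ast)$ without solving \eqref{GS} directly; the only mildly subtle point is the continuity/bootstrap step, which requires the initial strict inequality $M(\b{\rm u}_0) K(\b{\rm u}_0) < M(\b{\rm Q}) K(\b{\rm Q})$ to trap $y(t)$ in the increasing branch of $f$. This is exactly the same scheme as in \cite{Dodson2016, Dodson2018}, so I would indeed just cite those references and omit the routine details, as the statement already suggests.
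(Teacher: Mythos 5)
Your proposal is correct and is precisely the continuity/bootstrap argument the paper has in mind when it omits the proof and defers to \cite{Dodson2016,Dodson2018}: conservation laws plus the sharp Gagliardo--Nirenberg inequality \eqref{GN-inequality} reduce everything to the scalar function $f(y)=y-C_{GN}y^{5/4}$, the ratio identity \eqref{Q} and the expression for $C_{GN}$ identify the critical point $y_\ast$ with $M(\b{\rm Q})K(\b{\rm Q})$ and $f(y_\ast)$ with $M(\b{\rm Q})E(\b{\rm Q})$, and the strict initial inequality together with continuity of $t\mapsto M(\b{\rm u}(t))K(\b{\rm u}(t))$ traps $y(t)$ on the increasing branch so that the inverse of $f\vert_{[0,y_\ast]}$ yields the quantitative $\delta'=\delta'(\delta)>0$.
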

\begin{remark}
From \eqref{coe1} and the mass conservation, we have that $ \b{\rm u} $ is uniformly bounded in $ {\rm H}^1(\mathbb{R}^5)$. By Theorem \ref{well-posed}, the solution to \eqref{NLS system} $ \b{\rm u} $ is global.
\end{remark}

\begin{lemma}
[Coercivity $II$]\label{coer2}
Suppose $ M(\b{\rm u}) K(\b{\rm u}) \le (1 - \delta) M(\b{\rm Q}) K(\b{\rm Q}) $,
then there exists $ \delta^\prime = \delta^\prime (\delta) > 0 $ such that
\begin{equation}
4 K(\b{\rm u}^{\xi}) - 5 V(\b{\rm u}) \ge \delta^{\prime} K(\b{\rm u}^{\xi}),
\label{coe2}
\end{equation}
where $ \b{\rm u}^{\xi} $ be as in Proposition \ref{decay}.
\end{lemma}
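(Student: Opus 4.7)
The plan is to exploit the mass-resonance condition \eqref{mr} to transfer the coercivity of $\b{\rm u}$ over to its Galilean-shifted profile $\b{\rm u}^\xi$, and then close the inequality with the sharp Gagliardo--Nirenberg estimate \eqref{GN-inequality}.

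The crucial inputs are two Galilean invariances and one quadratic identity. First, $M(\b{\rm u}^\xi) = M(\b{\rm u})$ is automatic since $|e^{ix\cdot\xi/\kappa_j}|=1$. Second, expanding the trilinear form gives
\[
V(\b{\rm u}^\xi) \,=\, \Re\int_{\R^5}\exp\!\Bigl(ix\cdot\xi\bigl(-\tfrac{1}{\kappa_1}-\tfrac{1}{\kappa_2}+\tfrac{1}{\kappa_3}\bigr)\Bigr)\,\overline{u^1}\,\overline{u^2}\,u^3\,{\rm d}x,
\]
and the bracketed coefficient vanishes \emph{exactly} under \eqref{mr}, so $V(\b{\rm u}^\xi)=V(\b{\rm u})$. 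Third, expanding $|\nabla(e^{ix\cdot\xi/\kappa_j}u^j)|^2$ produces
\[
K(\b{\rm u}^\xi) \,=\, K(\b{\rm u}) + \xi\cdot P(\b{\rm u}) + \frac{|\xi|^2}{2}\sum_{j=1}^{3}\frac{\|u^j\|_{L^2}^2}{\kappa_j},
\]
which is a convex quadratic in $\xi$ whose critical point is exactly the value produced by formula \eqref{xi} in the global setting $\chi\equiv 1$. Hence $K(\b{\rm u}^\xi)\le K(\b{\rm u})$, and the hypothesis propagates to $M(\b{\rm u}^\xi)K(\b{\rm u}^\xi)\le(1-\delta)M(\b{\rm Q})K(\b{\rm Q})$.

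With this in hand, I would apply \eqref{GN-inequality} to $\b{\rm u}^\xi$ and use the ratio \eqref{Q} to identify $5C_{GN}(M(\b{\rm Q})K(\b{\rm Q}))^{1/4}=4$, which rewrites the Gagliardo--Nirenberg bound as
\[
5V(\b{\rm u}) \,=\, 5V(\b{\rm u}^\xi) \,\le\, 4\left(\frac{M(\b{\rm u}^\xi)K(\b{\rm u}^\xi)}{M(\b{\rm Q})K(\b{\rm Q})}\right)^{\!1/4}K(\b{\rm u}^\xi) \,\le\, 4(1-\delta)^{1/4}K(\b{\rm u}^\xi).
\]
Rearranging yields $4K(\b{\rm u}^\xi)-5V(\b{\rm u})\ge \delta'K(\b{\rm u}^\xi)$ with $\delta':=4\bigl(1-(1-\delta)^{1/4}\bigr)>0$, as required.

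The main obstacle is justifying $K(\b{\rm u}^\xi)\le K(\b{\rm u})$ when $\xi$ is given by the $\chi$-localized formula \eqref{xi} rather than by the global momentum minimizer: the quadratic identity above then has to be read with $\chi$-weighted momentum and mass, and only the localized quadratic is zeroed out. One resolves this either by replacing the global quantities throughout by their $\chi$-localized counterparts (matching the way Lemma \ref{coer2} is actually invoked inside the proof of Proposition \ref{decay}) or by absorbing the localization defect into a slightly smaller $\delta'$. In either case the Gagliardo--Nirenberg step is unchanged and the conclusion follows.
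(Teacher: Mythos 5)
Your proof has a genuine gap in the middle step. The quadratic identity
\[
K(\b{\rm u}^{\xi}) \,=\, K(\b{\rm u}) + \xi\cdot P(\b{\rm u}) + \frac{|\xi|^2}{2}\sum_{j=1}^3\frac{\|u^j\|_{L^2}^2}{\kappa_j}
\]
is correct, but it only gives $K(\b{\rm u}^{\xi})\le K(\b{\rm u})$ when $\xi$ is the \emph{global} minimizer $\xi_* = -P(\b{\rm u})/\!\sum_j\kappa_j^{-1}\|u^j\|_{L^2}^2$. The $\xi$ from \eqref{xi} is a $\chi$-localized ratio, and for that $\xi$ the quadratic can perfectly well push $K(\b{\rm u}^{\xi})$ above $K(\b{\rm u})$ — there is no uniform control because the dependence on $\xi$ is quadratic with a positive leading coefficient. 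Your proposed patch of "absorbing the localization defect into a smaller $\delta'$" therefore does not close: the defect is not a priori small.

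The fix, which is also how the paper argues, is to apply the Gagliardo--Nirenberg inequality not to $\b{\rm u}^{\xi}$ with the localized $\xi$ but to $\b{\rm u}^{\xi_*}$ with the global minimizer. Using $V(\b{\rm u})=V(\b{\rm u}^{\xi_*})$ (mass-resonance), $M(\b{\rm u})=M(\b{\rm u}^{\xi_*})$, and $K(\b{\rm u}^{\xi_*})\le K(\b{\rm u})$, one gets $5V(\b{\rm u})\le 4(1-\delta)^{1/4}K(\b{\rm u}^{\xi_*})$, and then one passes to an \emph{arbitrary} $\xi$ via the elementary observation $K(\b{\rm u}^{\xi_*})=\inf_\eta K(\b{\rm u}^{\eta})\le K(\b{\rm u}^{\xi})$. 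Equivalently, the paper writes the Gagliardo--Nirenberg bound, takes $\inf_\xi$ of the right-hand side (legitimate since the left-hand side is $\xi$-independent), notes that both factors are increasing in $K(\b{\rm u}^{\xi})$ so the infimum of the product is the product of the infima, and then estimates each infimum separately: the mass-kinetic factor by the hypothesis (taking $\xi=0$), and $\inf_\xi K(\b{\rm u}^{\xi})\le K(\b{\rm u}^{\xi})$ for the specific $\xi$ of interest. Your computation of $V(\b{\rm u}^\xi)=V(\b{\rm u})$ via the mass-resonance condition and the identification $5C_{GN}[M(\b{\rm Q})K(\b{\rm Q})]^{1/4}=4$ are both correct and match the paper; the only missing idea is routing the argument through the global minimizer $\xi_*$ rather than the localized $\xi$.
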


\begin{proof}
Using the fact that $ C_{GN} = 4 \cdot 5^{-\frac{5}{4}} M_{gs}^{-\frac{1}{2}} = 4 \cdot 5^{-\frac{5}{4}} \left[ M(\b{\rm Q}) \right]^{-\frac{1}{2}} = \frac{4}{5} \left[ M(\b{\rm Q}) K(\b{\rm Q}) \right]^{-\frac{1}{4}} $, we have
\begin{equation*}
\begin{aligned}
V(\b{\rm u}) \le C_{GN} \left[ M(\b{\rm u}) \right]^{\frac{1}{4}} \left[ K(\b{\rm u}) \right]^{\frac{5}{4}}
= \frac{4}{5} \left[ \frac{M(\b{\rm u}) K(\b{\rm u}}{M({\b{\rm Q})} K(\b{\rm Q})} \right]^{\frac{1}{4}} K(\b{\rm u}).
\end{aligned}
\end{equation*}
Owing to $ M(\b{\rm u}) = M(\b{\rm u}^{\xi}) $ and $ V(\b{\rm u}) = V(\b{\rm u}^{\xi}) $, we know furthermore
\begin{equation*}
\begin{aligned}
V(\b{\rm u}) = V(\b{\rm u}^{\xi}) \le & \frac{4}{5} \inf_{\xi \in \mathbb{R}^5} \left\{ \left[ \frac{M(\b{\rm u}) K(\b{\rm u}^{\xi})}{M({\b{\rm Q})} K(\b{\rm Q})} \right]^{\frac{1}{4}} K(\b{\rm u}^{\xi}) \right\} \\
= & \frac{4}{5} \inf_{\xi \in \mathbb{R}^5} \left[ \frac{M(\b{\rm u}) K(\b{\rm u}^{\xi})}{M({\b{\rm Q})} K(\b{\rm Q})} \right]^{\frac{1}{4}} \inf_{\xi \in \mathbb{R}^5} K(\b{\rm u}^{\xi}) \\
\le & \frac{4}{5} (1 - \delta)^{\frac{1}{4}} K(\b{\rm u}^{\xi}),
\end{aligned}
\end{equation*}
which implies \eqref{coe2}.
\end{proof}

\begin{lemma}
[Coercivity on balls]\label{coerb}
There exists $ R = R(\delta,  M(\b{\rm u}),  \b{\rm Q}) > 0 $ sufficiently large such that
\begin{equation}
\sup_{t \in \mathbb{R}} M(\b{\rm u}^{\xi}_R) K(\b{\rm u}^{\xi}_R) < (1 - \delta) M(\b{\rm Q}) K(\b{\rm Q}),
\label{coe}
\end{equation}
where $ \b{\rm u}^{\xi}_R(x) = \chi_R(x) \b{\rm u}^{\xi}(x) $ for $ \chi_R $, a smooth cut function on $ B(0, R) \subset \mathbb{R}^5 $.
In particular, by Lemma \ref{coer2}, there exists $ \delta^\prime = \delta^\prime(\delta) > 0 $ so that
\begin{equation}
4 K(\b{\rm u}^{\xi}_R) - 5 V(\b{\rm u}_R) \ge \delta^{\prime} K(\b{\rm u}^{\xi}_R)
\label{coe3}
\end{equation}
uniformly for $ t \in \mathbb{R} $.
\end{lemma}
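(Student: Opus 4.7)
The plan is to recognize that the boost parameter $\xi$ in \eqref{xi} is precisely the minimizer of $K(\b{\rm u}^{\xi}_R)$ viewed as a quadratic in $\xi$, and then combine this with Lemma~\ref{coer1} and a standard $O(R^{-2})$ truncation error.

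Since $\chi_R\le 1$ and the Galilean phase $e^{ix\cdot\xi/\kappa_i}$ has modulus one, $M(\b{\rm u}^{\xi}_R)\le M(\b{\rm u}^{\xi})=M(\b{\rm u})$ trivially. For the kinetic term, applying the product rule to $\nabla(\chi_R e^{ix\cdot\xi/\kappa_i}u^i)$, expanding $|\nabla(\chi_R u^{i,\xi})|^2$, and integrating by parts on the cross term to rewrite $\int\chi_R\nabla\chi_R\cdot\nabla|u^i|^2\,{\rm d}x$ as $-\int|u^i|^2(|\nabla\chi_R|^2+\chi_R\Delta\chi_R)\,{\rm d}x$, one obtains after summing $\tfrac{\kappa_i}{2}\|\nabla(\chi_R u^{i,\xi})\|_{L^2}^2$ over $i=1,2,3$:
\begin{equation*}
K(\b{\rm u}^{\xi}_R)=a|\xi|^2+b\cdot\xi+c,
\end{equation*}
with $a=\tfrac12\sum_i\kappa_i^{-1}\int\chi_R^2|u^i|^2\,{\rm d}x$, $b=\sum_i\int\chi_R^2\Im(\overline{u^i}\nabla u^i)\,{\rm d}x$, and $c=\sum_i\tfrac{\kappa_i}{2}\int\chi_R^2|\nabla u^i|^2\,{\rm d}x-\tfrac12\sum_i\kappa_i\int\chi_R(\Delta\chi_R)|u^i|^2\,{\rm d}x$. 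Dividing the numerator and denominator of \eqref{xi} by $\kappa_1\kappa_2\kappa_3$ shows $\xi=-b/(2a)$, the unique minimizer of this quadratic. Therefore, using $\chi_R^2\le 1$ and $|\Delta\chi_R|\lesssim R^{-2}$,
\begin{equation*}
K(\b{\rm u}^{\xi}_R)=c-\tfrac{|b|^2}{4a}\le c\le K(\b{\rm u})+\frac{C}{R^2}M(\b{\rm u}).
\end{equation*}

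Combining, $M(\b{\rm u}^{\xi}_R)K(\b{\rm u}^{\xi}_R)\le M(\b{\rm u})K(\b{\rm u})+CM(\b{\rm u})^2/R^2$. Lemma~\ref{coer1} applied to the hypotheses of Theorem~\ref{main} furnishes some $\delta_1>0$ with $M(\b{\rm u})K(\b{\rm u})\le(1-\delta_1)M(\b{\rm Q})K(\b{\rm Q})$ uniformly in $t$; for any $\delta\in(0,\delta_1)$, choosing $R=R(\delta,M(\b{\rm u}),\b{\rm Q})$ large enough that $CM(\b{\rm u})^2/R^2<(\delta_1-\delta)M(\b{\rm Q})K(\b{\rm Q})$ yields \eqref{coe}. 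The second assertion \eqref{coe3} then follows directly from Lemma~\ref{coer2} applied to $\b{\rm u}_R$, using the mass-resonance \eqref{mr} to conclude $V(\b{\rm u}_R)=V(\b{\rm u}^{\xi}_R)$ via cancellation of the phases in $\overline{u^{1,\xi}_R}\,\overline{u^{2,\xi}_R}\,u^{3,\xi}_R$.

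The main obstacle is the bookkeeping required to identify $\xi$ in \eqref{xi} with $-b/(2a)$; the $\kappa_i$-weights in \eqref{xi} are precisely engineered so that this matches. Mass-resonance \eqref{mr} enters implicitly throughout—in Lemma~\ref{coer1}, in the existence of a Galilean boost leaving $V$ invariant, and hence in the transfer of Lemma~\ref{coer2} from $\b{\rm u}$ to its truncation $\b{\rm u}_R$.
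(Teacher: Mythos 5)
Your argument is correct and follows the same overall strategy as the paper: truncation can only decrease the mass, the kinetic energy of the truncation exceeds the original by at most $\mathcal{O}(R^{-2}M(\b{\rm u}))$ via the integration-by-parts identity for $\int\chi_R^2|\nabla u|^2$, and then Lemma~\ref{coer1} together with a large choice of $R$ closes the estimate. Where you add genuine value is in making explicit that $\xi$ as defined in \eqref{xi} is precisely $-b/(2a)$, the minimizer of the quadratic $\xi\mapsto K(\b{\rm u}^{\xi}_R)$, so that $K(\b{\rm u}^{\xi}_R)\le K(\b{\rm u}_R)\le K(\b{\rm u})+\mathcal{O}(R^{-2}M(\b{\rm u}))$. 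The paper's proof instead records $K(\b{\rm u}^{\xi}_R)\le K(\b{\rm u}^{\xi})+\mathcal{O}(R^{-2}M(\b{\rm u}))$ and moves on, but $K(\b{\rm u}^{\xi})$ (the global kinetic energy with a boost chosen to kill only the \emph{localized} momentum) is not a priori dominated by $K(\b{\rm u})$, so the chain as literally written does not close; the minimizing observation you supply is the step needed to repair it. Your verification that $V(\b{\rm u}_R)=V(\b{\rm u}_R^{\xi})$ under the mass-resonance \eqref{mr}, so that Lemma~\ref{coer2} transfers to the truncation and yields \eqref{coe3}, is also correct.
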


\begin{proof}
First note that multiplication by $ \chi $ only decreases the $ L^{2}(\mathbb{R}^5) $-norm, that is
\begin{equation*}
M \left( \chi_{R} \b{\rm u}^{\xi}(t) \right) \leqslant M \left( \b{\rm u}^{\xi}(t) \right)
\end{equation*}
uniformly for $ t \in \mathbb{R} $. Thus, it suffices to consider the $ \dot{\rm H}^1(\mathbb{R}^5) $-norm.
For this, we will make use of the following identity:
\begin{equation*}
\int_{\mathbb{R}^5} \chi_{R}^2 \vert \nabla u^{\xi} \vert^2 {\rm d}x = \int_{\mathbb{R}^5} \vert \nabla ( \chi_{R} u^{\xi} ) \vert^2 + \chi_{R} \Delta (\chi_{R}) \vert u^{\xi} \vert^2 {\rm d}x,
\end{equation*}
which can be obtained by a direct computation. In particular,
\begin{equation*}
K(\b{\rm u}^{\xi}_R) \le K(\b{\rm u}^{\xi}) + \mathcal{O}\left( \frac{1}{R^2} M(\b{\rm u}) \right).
\end{equation*}
Choosing $ R \gg 1 $ sufficiently large depending on $ \delta,  M(\b{\rm u}) $ and $ \b{\rm Q} $, the result follows.
\end{proof}

\section{Proof of scattering criterion}
In this section, we will follow the strategy in \cite{Dodson2018} to prove the scattering criterion (Proposition \ref{scat}).
Roughly speaking, it states that if in any large window of time there always exists an interval large enough on which the scattering norm is very small,
then the solution of \eqref{NLS system} has to scatter.
\begin{proposition}[Scattering criterion]\label{scat}
Let $ \b{\rm u}_0, \b{\rm Q}, I $ be as in Theorem \ref{decay}, and suppose further that $ \Vert \b{\rm u}_0 \Vert_{{\rm H}^1(\mathbb{R}^5)} \lesssim E_0 $.
Let $ \b{\rm u} : \mathbb{R} \times \mathbb{R}^5 \to \mathbb{C} $ be the corresponding global solution to \eqref{NLS system}.
Suppose that $ \exists ~ t_0 \in I $ such that
\begin{equation}
\Vert \b{\rm u} \Vert_{{\rm L}_{t}^6([t_0-l, t_0], {\rm L}_{x}^3(\mathbb{R}^5))} \le \eps^{\frac{1}{24}},
\label{b1}
\end{equation}
where $ \eps = \eps(E_0) $ is sufficiently small, $l=\eps ^{-\frac{1}{4}}$ and $ T_0 = T_0(\eps, E_0) $ is large enough.
Then $ \b{\rm u} $ scatters forward in time.
\end{proposition}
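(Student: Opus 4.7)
I plan to follow the Tao--Dodson--Murphy strategy of \cite{Tao2004, Dodson2016}. My goal will be to show $\|\b{\rm u}\|_{{\rm L}_t^6 {\rm L}_x^3([t_0, \infty))} < \infty$, which, combined with the uniform ${\rm H}^1$-bound of $\b{\rm u}$, yields forward scattering by the standard Strichartz persistence-of-regularity argument. A short Strichartz continuity bootstrap, using Lemma \ref{strichartz} applied to the Duhamel formula from $t_0$ together with the pointwise bilinear bound $|\b{\rm f}(\b{\rm u})| \lesssim |\b{\rm u}|^2$, will reduce this to proving the free-tail smallness
\begin{equation*}
\left\|\mathcal{S}(\cdot - t_0)\b{\rm u}(t_0)\right\|_{{\rm L}_t^6 {\rm L}_x^3([t_0, \infty))} \le \eta
\end{equation*}
for some threshold $\eta = \eta(E_0)$ chosen small compared to the nonlinear Strichartz constant.

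To establish this smallness, I will apply Duhamel from $t_0 - l$ to $t_0$ and decompose
\begin{equation*}
\mathcal{S}(t - t_0) \b{\rm u}(t_0) = \underbrace{\mathcal{S}\bigl(t - (t_0 - l)\bigr) \b{\rm u}(t_0 - l)}_{\b{\rm F}(t)} + \underbrace{i \int_{t_0 - l}^{t_0} \mathcal{S}(t - s) \b{\rm f}(\b{\rm u}(s))\, {\rm d}s}_{\b{\rm G}(t)}.
\end{equation*}
For the nonlinear piece $\b{\rm G}$, I will invoke the inhomogeneous Strichartz estimate of Lemma \ref{strichartz}, the bilinear bound $|\b{\rm f}(\b{\rm u})| \lesssim |\b{\rm u}|^2$, a H\"older decomposition inside $[t_0 - l, t_0]$, the hypothesis $\|\b{\rm u}\|_{{\rm L}^6 {\rm L}^3([t_0-l, t_0])} \le \eps^{1/24}$, and uniform control of the remaining space-time norms by $E_0$ (via the uniform ${\rm H}^1$-bound and Sobolev). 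Combined appropriately, these should produce $\|\b{\rm G}\|_{{\rm L}_t^6 {\rm L}_x^3([t_0, \infty))} \lesssim_{E_0} \eps^{\sigma_1}$ for some $\sigma_1 > 0$; the specific exponent $\tfrac{1}{24}$ in the hypothesis is tuned so that the H\"older losses close with a positive power of $\eps$.

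For the linear piece $\b{\rm F}$, for $t \ge t_0$ the elapsed time $t - (t_0 - l) \ge l = \eps^{-1/4}$ is large. A direct Strichartz bound only gives the uniform estimate $\lesssim_{E_0} 1$, which does not suffice, so the time gap must be exploited. My plan is a Littlewood--Paley split $\b{\rm u}(t_0 - l) = P_{\le N}\b{\rm u}(t_0 - l) + P_{>N}\b{\rm u}(t_0 - l)$: the high-frequency part is controlled using the $\dot{\rm H}^{1/2}$-admissibility of $(6, 3)$ in dimension $5$, yielding
\begin{equation*}
\left\|\mathcal{S}(\cdot) P_{>N}\b{\rm u}(t_0 - l)\right\|_{{\rm L}_t^6 {\rm L}_x^3(\mathbb{R})} \lesssim \|P_{>N}\b{\rm u}(t_0 - l)\|_{\dot{\rm H}^{1/2}} \lesssim_{E_0} N^{-1/2},
\end{equation*}
while the low-frequency part is controlled by combining the dispersive estimate $\|\mathcal{S}(\tau) g\|_{L^3(\mathbb{R}^5)} \lesssim |\tau|^{-5/6}\|g\|_{L^{3/2}}$ with a Bernstein-type bound at frequency scale $N$ and time integration over $\tau \ge l$, producing a decay factor in $l$ at the cost of a positive power of $N$. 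Balancing $N$ against $l = \eps^{-1/4}$ will give $\|\b{\rm F}\|_{{\rm L}_t^6 {\rm L}_x^3([t_0, \infty))} \lesssim_{E_0} \eps^{\sigma_2}$, and summing this with the bound on $\b{\rm G}$ delivers the required smallness.

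\textbf{Main obstacle.} The hard step is the linear piece $\b{\rm F}$: extracting genuine smallness of a free Strichartz tail for data only bounded in ${\rm H}^1$, without radial symmetry or weighted-$L^2$ decay, is delicate. The naive Strichartz estimate only gives a uniform constant depending on $E_0$, and the calibration of the frequency cutoff $N$ against the time gap $l$ through a careful interplay of Littlewood--Paley, Bernstein, and dispersive estimates is the technical heart of the argument. It also constrains the exponent $\tfrac{1}{24}$ in the hypothesis, which must be small enough to absorb the H\"older losses in the bilinear estimate for $\b{\rm G}$ but compatible with the window length $l = \eps^{-1/4}$ imposed by the decay computation for $\b{\rm F}$.
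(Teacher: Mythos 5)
The overall template (reduce to small free-tail, bootstrap) and the treatment of the recent-history nonlinear piece $\b{\rm G}$ match the paper's argument closely. However, your treatment of the linear piece $\b{\rm F}$ contains a genuine gap: the decomposition you start from, namely applying Duhamel only from $t_0-l$ so that
\[
\mathcal{S}(t-t_0)\b{\rm u}(t_0)=\mathcal{S}\bigl(t-(t_0-l)\bigr)\b{\rm u}(t_0-l)+i\int_{t_0-l}^{t_0}\mathcal{S}(t-s)\b{\rm f}(\b{\rm u}(s))\,{\rm d}s,
\]
makes the free piece $\b{\rm F}(t)=\mathcal{S}\bigl(t-(t_0-l)\bigr)\b{\rm u}(t_0-l)$ hopeless to bound. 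For a generic function $g\in {\rm H}^1(\R^5)$ with $\|g\|_{{\rm H}^1}\lesssim E_0$, the tail $\|\mathcal{S}(\tau)g\|_{{\rm L}^6_\tau([l,\infty),{\rm L}_x^3)}$ need not be small however large $l$ is: take $g=\mathcal{S}(-l)h$ for a fixed $h$, then $\|\mathcal{S}(\tau)g\|_{{\rm L}^6_\tau([l,\infty),{\rm L}_x^3)}=\|\mathcal{S}(\tau')h\|_{{\rm L}^6_{\tau'}([0,\infty),{\rm L}_x^3)}$ which is independent of $l$. Your Littlewood--Paley split does not rescue this, because the intended Bernstein-plus-dispersive bound on the low frequencies requires control of $\|P_{\le N}\b{\rm u}(t_0-l)\|_{L^{3/2}}$, and Bernstein does not let you pass from $L^2$ to the \emph{smaller} exponent $3/2$ for a low-frequency cutoff; only $q\mapsto p$ with $p\ge q$ gains. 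Since ${\rm H}^1(\R^5)\hookrightarrow L^p$ only for $2\le p\le 10/3$, the $L^{3/2}$ norm of $\b{\rm u}(t_0-l)$ is simply not controlled by the hypotheses.

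The paper's proof sidesteps this by applying Duhamel all the way back to time $0$, so the free piece becomes $\mathcal{S}(t)\b{\rm u}_0$ with $\b{\rm u}_0$ \emph{fixed}; its ${\rm L}_t^6{\rm L}_x^3$ tail on $[T_0,\infty)$ can then be made small by choosing $T_0$ large, via monotone convergence applied to the finite Strichartz norm (this is \eqref{b0}). The full integral $\int_0^{t_0}\mathcal{S}(t-s)\b{\rm f}(\b{\rm u}(s))\,{\rm d}s$ is then split at $t_0-l$ (Lemma \ref{LM3.2}). The distant-history integral over $[0,t_0-l]$ is estimated by interpolation between two endpoints: an ${\rm L}_t^3{\rm L}_x^6$ bound, obtained by applying the dispersive estimate directly to the \emph{nonlinear} term, which gains because $\|\b{\rm f}(\b{\rm u}(s))\|_{L_x^{6/5}}\lesssim\|\b{\rm u}(s)\|_{L_x^{12/5}}^2\lesssim E_0^2$ (Sobolev embedding puts the quadratic term in a low Lebesgue space where dispersion applies, with the elapsed time $\ge l$ yielding $\lesssim l^{-1/3}$); and an ${\rm L}_t^\infty{\rm L}_x^2$ bound, obtained by rewriting the Duhamel integral as a difference of free evolutions $\mathcal{S}(t-t_0+l)\b{\rm u}(t_0-l)-\mathcal{S}(t)\b{\rm u}_0$. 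Interpolating gives $\lesssim_{E_0} l^{-1/6}=\eps^{1/24}$. In short: the smallness you need from the time gap $l$ can only be extracted by applying dispersion to the \emph{nonlinear} Duhamel integrand (which has the extra integrability coming from the quadratic nonlinearity), not to the free evolution of the $H^1$-bounded state $\b{\rm u}(t_0-l)$.
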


Before showing the proof of Proposition \ref{scat}, we state a space-time norm bound of the effect of nonlinear term $ \b{\rm f}(\b{\rm u}) $ in finite time, under the assumption of \eqref{b1}.

\begin{lemma}
	\label{LM3.2}
	Let $ \b{\rm u} $ be as in Proposition \ref{scat} and suppose that \eqref{b1} holds.
	Then $ \forall ~ a \in \mathbb{R}, \exists ~ t_0 \in [a, a+T_0] $ such that
	\begin{equation}
	\left\| \int_{0}^{t_0}\mathcal{S}(t-s) \b{\rm f}(\b{\rm u}(s)) {\rm d}s \right\|_{L_t^6([t_0,\infty),L_x^3(\mathbb{R}))} \lesssim \eps^{\frac1{24}}.
	\label{18}
	\end{equation}
\end{lemma}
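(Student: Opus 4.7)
My plan is to follow the split-Duhamel scheme of Dodson--Murphy at the scale $l=\eps^{-1/4}$. First I would use a pigeonhole argument on the window $[a,a+T_0]$: partitioning it into subintervals of length $l$ and using \eqref{b1}, one locates a subinterval $[t_0-l,t_0]\subset[a,a+T_0]$ on which $\|\b{\rm u}\|_{L^6_t{\rm L}^3_x([t_0-l,t_0])}\lesssim \eps^{1/24}$, provided $T_0=T_0(\eps,E_0)$ is taken large enough. I would then split
\[
\int_0^{t_0}\mathcal{S}(t-s)\b{\rm f}(\b{\rm u}(s))\,{\rm d}s = I_1(t)+I_2(t),\qquad I_1(t):=\int_{t_0-l}^{t_0}(\cdots)\,{\rm d}s,\quad I_2(t):=\int_0^{t_0-l}(\cdots)\,{\rm d}s,
\]
and estimate each on the target norm $L^6_t{\rm L}^3_x([t_0,\infty))$.

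For the near piece $I_1$, apply the inhomogeneous Strichartz estimate of Lemma \ref{strichartz}(iii) on $[t_0-l,\infty)$. Since $(6,3)$ is $\dot{\rm H}^{1/2}$-admissible in $\R^5$ and $(3,3/2)$ is dual to the $\dot{\rm H}^{-1/2}$-admissible pair $(3/2,3)$, together with H\"older's inequality applied to the bilinear structure of $\b{\rm f}$ we obtain
\[
\|I_1\|_{L^6_t{\rm L}^3_x([t_0,\infty))}\lesssim \|\b{\rm f}(\b{\rm u})\|_{L^3_t{\rm L}^{3/2}_x([t_0-l,t_0])}\lesssim \|\b{\rm u}\|_{L^6_t{\rm L}^3_x([t_0-l,t_0])}^2 \lesssim \eps^{1/12}.
\]

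For the far piece $I_2$, use the Duhamel identity for $\b{\rm u}$ at time $t_0-l$ to rewrite
\[
I_2(t) = -i\,\mathcal{S}(t-(t_0-l))\b{\rm w}_0,\qquad \b{\rm w}_0 := \b{\rm u}(t_0-l) - \mathcal{S}(t_0-l)\b{\rm u}_0,
\]
so that $\b{\rm w}_0$ is uniformly bounded in ${\rm H}^1(\R^5)$ with norm $\lesssim E_0$. For $t\ge t_0$ the elapsed time $t-(t_0-l)\ge l$, and I would exploit this gap by combining the dispersive estimate $\|\mathcal{S}(\tau)h\|_{{\rm L}^3_x}\lesssim \tau^{-5/6}\|h\|_{{\rm L}^{3/2}_x}$ with the Duhamel representation of $\b{\rm w}_0$ itself, whose density $\b{\rm f}(\b{\rm u})$ is uniformly bounded in ${\rm L}^{3/2}_x$ by $E_0^2$ via Sobolev embedding ${\rm H}^1\hookrightarrow {\rm L}^3$. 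A Hardy--Littlewood--Sobolev integration in time extracts a decay factor of $l^{-2/3}$, so with $l=\eps^{-1/4}$ we obtain $\|I_2\|_{L^6_t{\rm L}^3_x([t_0,\infty))}\lesssim \eps^{1/6}$. Summing the two pieces yields the target $\lesssim \eps^{1/12}+\eps^{1/6}\lesssim \eps^{1/24}$ for $\eps$ small.

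The main obstacle is the far piece $I_2$: only the conserved ${\rm H}^1$ bound on $\b{\rm w}_0$ is available, and the quantitative $l^{-2/3}$ decay in the scattering norm must be squeezed purely out of the time separation $l$, which forces the HLS balance and the calibration $l=\eps^{-1/4}$ to be chosen precisely so that the resulting negative power of $l$ dominates the target $\eps^{1/24}$.
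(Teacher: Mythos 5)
The decomposition at $t_0-l$ with $l=\eps^{-1/4}$ is the right framework, and your treatment of the near piece $I_1$ is fine in spirit (indeed cleaner than the paper's, which passes through $|\nabla|^{1/2}$ and a bootstrap bound on $\||\nabla|^{1/2}\b{\rm u}\|_{L^2_tL^{10/3}_x}$; be aware, though, that the dual pair $(q',r')=(3/2,3)$ has $q'<2$, so it lies outside the $\Lambda_{-s}$ ranges of Lemma \ref{strichartz} and you would need to invoke Foschi-type non-admissible inhomogeneous Strichartz to justify it). The genuine gap is in the far piece $I_2$. The dispersive estimate $\|\mathcal{S}(\tau)\|_{L^{3/2}_x\to L^3_x}\lesssim\tau^{-5/6}$ has exponent $-5/6>-1$, so $\int_0^{t_0-l}|t-s|^{-5/6}\,ds$ does not converge; for $t\geq t_0$ it is of order $t^{1/6}-(t-t_0+l)^{1/6}$, which for $t$ near $t_0$ grows like $t_0^{1/6}$ and is not controlled by any power of $l$. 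Taking the $L^6_t([t_0,\infty))$ norm of this produces a quantity that grows in $t_0$, not one that decays in $l$. The HLS-in-time variant fares no better: with $\alpha=5/6$ and $q=6$ it gives $\|I_2\|_{L^6_tL^3_x}\lesssim\|\b{\rm f}(\b{\rm u})\|_{L^3_tL^{3/2}_x([0,t_0-l])}$, and the right-hand side is supported on the long window $[0,t_0-l]$, so it is $O((t_0-l)^{1/3}E_0^2)$, again large. The factor $l^{-2/3}$ you claim does not appear from this route.

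The paper avoids this by taking a \emph{pair} of endpoint bounds and interpolating. It uses the stronger dispersive decay at a higher Lebesgue exponent, $\|\mathcal{S}(\tau)\b{\rm f}(\b{\rm u}(s))\|_{L^6_x}\lesssim|\tau|^{-5/3}\|\b{\rm u}(s)\|^2_{L^{12/5}_x}\lesssim|\tau|^{-5/3}E_0^2$, where the exponent $-5/3<-1$ makes the $s$-integral over $(-\infty,t_0-l]$ converge and yields $\|I_2\|_{L^3_tL^6_x([t_0,\infty))}\lesssim E_0^2\,l^{-1/3}$. This is then interpolated with the trivial bound $\|I_2\|_{L^\infty_tL^2_x}\lesssim E_0$, which follows from the identity $I_2(t)=\mathcal{S}(t-t_0+l)\b{\rm u}(t_0-l)-\mathcal{S}(t)\b{\rm u}_0$ and the conserved ${\rm L}^2$ norm. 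Interpolation with weights $(1/2,1/2)$ lands on $L^6_tL^3_x$ and gives the correct rate $\|I_2\|_{L^6_tL^3_x}\lesssim_{E_0}l^{-1/6}=\eps^{1/24}$, not the $\eps^{1/6}$ you asserted. Your heuristic that the gap $t-s\geq l$ can be ``squeezed'' directly at the $L^3_x$ level is precisely what fails: you need the faster $L^6_x$ decay for integrability in $s$, and the $L^\infty_tL^2_x$ bound to come back down to the scattering norm.
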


\begin{proof}
	Let $ a \in \mathbb{R} $ and choose $ t_0 $ as in \eqref{b1}.
	Using Dumahel formula, Strichartz estimates, and H\"older inequality,
	\begin{align*}
	& ~ \left\| |\nabla|^\frac12 \b{\rm u} \right\|_{L_t^2([t_0-l,t_0),L_x^{\frac{10}{3}}(\mathbb{R}^5))} \\
	\le & ~ \left\| |\nabla|^\frac12 \b{\rm u}_0 \right\|_{L^2(\mathbb{R}^5)}
	+ \left\| |\nabla|^\frac12 \int_{0}^{t} \mathcal{S}(t-s) \b{\rm f}(\b{\rm u}(s)) {\rm d}s \right\|_{L_t^2([t_0-l,t_0),L_x^{\frac{10}{3}}(\mathbb{R}^5))} \\
	\le & ~ \left\| \b{\rm u}_0 \right\|_{{\dot H}^{\frac12}(\mathbb{R}^5)}
	+ \left\| |\nabla|^\frac12 \b{\rm f}(\b{\rm u}) \right\|_{L_t^{\frac32}([t_0-l,t_0),L_x^{\frac{30}{19}}(\mathbb{R}^5))} \\
	\le & ~ \left\| \b{\rm u}_0 \right\|_{H^1(\mathbb{R}^5)}
	+ \left\| \b{\rm u} \right\|_{L_t^6([t_0-l,t_0),L_x^3(\mathbb{R}^5))}
	\left\| |\nabla|^\frac12 \b{\rm u} \right\|_{L_t^2([t_0-l,t_0),L_x^\frac{10}{3}(\mathbb{R}^5))} \\
	\le & ~ \left\| \b{\rm u}_0 \right\|_{H^1(\mathbb{R}^5)} + \eps^{\frac1{24}} \left\| |\nabla|^\frac12 \b{\rm u} \right\|_{L_t^2([t_0-l,t_0),L_x^\frac{10}{3}(\mathbb{R}^5))}.
	\end{align*}
	By the standard bootstrap arguments, we can get
	$$ \left\| |\nabla|^\frac12 \b{\rm u} \right\|_{L_t^2([t_0-l,t_0),L_x^\frac{10}{3}(\mathbb{R}^5))} \lesssim 1. $$
	Thus, by Strichartz estimates, for any $ l \in (0, t_0) $
	\begin{align*}
	\left\| \int_{t_0-l}^{t_0} \mathcal{S}(t - s) \b{\rm f}(\b{\rm u}(s)) {\rm d}s \right\|_{L_t^6([t_0,\infty),L_x^3(\mathbb{R}^5))}
	\le & ~ \left\| |\nabla|^\frac12 \b{\rm f}(\b{\rm u}) \right\|_{L_t^{\frac32}([t_0-l,t_0),L_x^{\frac{30}{19}}(\mathbb{R}^5))} \\
	\lesssim & ~ \left\| \b{\rm u} \right\|_{L_t^6([t_0-l,t_0),L_x^3(\mathbb{R}^5))} \left\| |\nabla|^\frac12 \b{\rm u} \right\|_{L_t^2([t_0-l,t_0),L_x^\frac{10}{3}(\mathbb{R}^5))} \\
	\lesssim & ~ \eps^{\frac1{24}}.
	\end{align*}
	
	We check the remaining part of the integral by interpolation.
	Firstly, using the dispersive estimates and Sobolev embedding $ H^1(\mathbb{R}^5) \hookrightarrow L^{\frac{12}{5}}(\mathbb{R}^5) $, we obtain
	\begin{align*}
	\left\| \mathcal{S}(t - s) \b{\rm f}(\b{\rm u}(s)) \right\|_{L_x^6(\mathbb{R}^5)}
	& ~ \le |t - s|^{-\frac53} \left\| \b{\rm u}(s) \right\|_{L_x^{\frac{12}5}(\mathbb{R}^5)}^2 
	\lesssim |t - s|^{-\frac53} E_0^2.
	\end{align*}
	Thus,
	$$ \left\| \int_{0}^{t_0-l} \mathcal{S}(t - s) \b{\rm f}(\b{\rm u}(s)) {\rm d}s \right\|_{L_t^3([t_0,\infty),L_x^6(\mathbb{R}^5))}
	\lesssim E_0^2 ~ l^{-\frac13}. $$
	Next, using the identity
	\begin{equation*}
	\begin{aligned}
	i\int_{0}^{t_0-l} \mathcal{S}(t - s) \b{\rm f}(\b{\rm u}(s)) {\rm d}s
	= & ~ \mathcal{S}(t - t_0 + l) \left[ \b{\rm u}(t_0-l) - \mathcal{S}(t_0 - l) \b{\rm u}_0 \right] \\
	= & ~ \mathcal{S}(t-t_0+l) \b{\rm u}(t_0-l) - \mathcal{S}(t) \b{\rm u}_0,
	\end{aligned}
	\end{equation*}
	and Strichartz estimates, we have
	\begin{equation*}
	\left\Vert \mathcal{S}(t-t_0+l) \b{\rm u}(t_0-l) - \mathcal{S}(t) \b{\rm u}_0  \right\Vert_{L_t^\infty([t_0,\infty),L_x^2(\mathbb{R}^5))}
	\lesssim E_0.
	\end{equation*}
	Thus, by interpolation, we have
	\begin{equation*}
	\begin{aligned}
	& ~ \left\|\int_{0}^{t_0 - l}\mathcal{S}(t-s)\b{\rm f}(\b{\rm u}(s)) {\rm d}s \right\|_{L_t^6([t_0,\infty),L_x^3(\mathbb{R}^5))} \\
	\le & ~ \left\|\int_{0}^{t_0 - l}\mathcal{S}(t-s)\b{\rm f}(\b{\rm u}(s)) {\rm d}s \right\|_{L_t^3([t_0,\infty),L_x^6)}^{\frac12}
	\left\|\int_{0}^{t_0 - l}\mathcal{S}(t-s)\b{\rm f}(\b{\rm u}(s)) {\rm d}s \right\|_{L_t^\infty([t_0,\infty),L_x^2)}^{\frac12} \\
	\lesssim&_{E_0} ~ l^{-\frac16} = \eps^{\frac1{24}},
	\end{aligned}
	\end{equation*}
	if we are able to fix $ l = \eps^{-\frac14} \in (0, t_0) $.
	And this can be done easily, as long as we choose $ t_0 \in [a, a+T_0] $ to be large enough.
	
	To sum up,
	\begin{equation*}
	\int_{0}^{t_0}\mathcal{S}(t-s)\b{\rm f}(\b{\rm u}(s)) {\rm d}s
	= \int_{0}^{t_0 - l}\mathcal{S}(t-s)\b{\rm f}(\b{\rm u}(s)) {\rm d}s +
	\int_{t_0 - l}^{t_0}\mathcal{S}(t-s)\b{\rm f}(\b{\rm u}(s)) {\rm d}s,
	\end{equation*}
	so \eqref{18} holds.
\end{proof}

Now, we turn to prove the scattering criterion, Proposition \ref{scat}.

\begin{proof}
	First of all, we split $ \mathbb{R} $ into three intervals, $ \mathbb{R} = (-\infty, -T_0]\cup[-T_0,T_0]\cup[T_0, +\infty) $, which are denoted by $ I_1, ~ I_2, ~ I_3 $.
	We can choose $ T_0 \gg 1 $ large enough so that
	\begin{align}\label{b0}
	\left\| \mathcal{S}(t) \b{\rm u}_0 \right\|_{L_t^6((I_j),L_x^3(\mathbb{R}^5))} \lesssim \eps^{\frac1{24}}, \quad \forall ~ j = 1, ~ 3.
	\end{align}
	Our goal is to prove for $T_0$ large, we have
	$$ \| \b{\rm u} \|_{L_t^6((\mathbb{R}),L_x^3(\mathbb{R}^5))} \lesssim_{E_0} T_0. $$
	We only need to prove
	\begin{align}\label{b3}
	\| \b{\rm u} \|_{L_t^6((I_j),L_x^3(\mathbb{R}^5))}\lesssim_{E_0} T_0,
	\end{align}
	for each $j=1,\ 2,\ 3 $.
	Using Sobolev embedding and H\"older inequality
	\begin{align}\label{b2}
	\| \b{\rm u} \|_{L_t^6((I_2),L_x^3(\mathbb{R}^5))}^6 \lesssim_{E_0} T_0.
	\end{align}
	It is suffice to consider $j=1,\ 3$. Since the way of dealing with $I_1$ is same as in $I_3$, therefor we only check \eqref{b3} with $I_3$.
	
	Since $ a \in \mathbb{R} $ in Lemma \ref{LM3.2} can be taken arbitrarily, we take $ a = T_0 $. Thus, $ t_0 \in [a, a+T_0] = [T_0, 2T_0] $. Similarly to \eqref{b2}, we have
	\begin{equation*}
	\begin{aligned}
	\left\Vert \b{\rm u} \right\Vert^6_{{\rm L}_{t}^6(I_{2}, {\rm L}_{x}^3(\mathbb{R}^5))} = & \left\Vert \b{\rm u} \right\Vert^6_{{\rm L}_{t}^6((T_0, t_0], {\rm L}_{x}^3(\mathbb{R}^5))} + \left\Vert \b{\rm u} \right\Vert^6_{{\rm L}_{t}^6((t_0, \infty), {\rm L}_{x}^3(\mathbb{R}^5))} \\
	\lesssim & ~ T_0 + \left\Vert \b{\rm u} \right\Vert^6_{{\rm L}_{t}^6((t_0, \infty), {\rm L}_{x}^3(\mathbb{R}^5))}.
	\end{aligned}
	\end{equation*}
	
	We use dispersive estimates, H\"older inequality and Hardy-Littlewood-Sobolev inequality (Lemma \ref{HLS}) to find
	\begin{equation*}
	\begin{aligned}
	& ~ \left\Vert \b{\rm u} \right\Vert_{{\rm L}_{t}^6((t_0, \infty), {\rm L}_{x}^3(\mathbb{R}^5))} \\
	\le & ~ \left\Vert \mathcal{S}(t-t_0) \b{\rm u}(t_0) \right\Vert_{{\rm L}_{t}^6((t_0, \infty), {\rm L}_{x}^3(\mathbb{R}^5))} + \left\Vert \int_{t_0}^{t}|t-s|^{-\frac56} \|\b{\rm f}(\b{\rm u}(s))\|_{L_x^\frac32(\mathbb{R}^5)} {\rm d}s \right\Vert_{{\rm L}_{t}^6((t_0, \infty))} \\
	\le & ~ \left\Vert \mathcal{S}(t-t_0) \b{\rm u}(t_0) \right\Vert_{{\rm L}_{t}^6((t_0, \infty), {\rm L}_{x}^3(\mathbb{R}^5))} + \left\Vert \int_{t_0}^{t}|t-s|^{-\frac56} \|\b{\rm u}(s)\|_{{\rm L}_x^3(\mathbb{R}^5)}^2 {\rm d}s \right\Vert_{{\rm L}_{t}^6((t_0, \infty))} \\
	\le & ~ \left\Vert \mathcal{S}(t-t_0) \b{\rm u}(t_0) \right\Vert_{{\rm L}_{t}^6((t_0, \infty), {\rm L}_{x}^3(\mathbb{R}^5))} + \left\Vert \|\b{\rm u}(t)\|_{{\rm L}_{x}^3(\mathbb{R}^5)}^2 \right\Vert_{{\rm L}_t^3(t_0, \infty)}\\
	\le & ~ \left\Vert \mathcal{S}(t-t_0) \b{\rm u}(t_0) \right\Vert_{{\rm L}_{t}^6((t_0, \infty), {\rm L}_{x}^3(\mathbb{R}^5))} + \|\b{\rm u}\|_{{\rm L}_{t}^6((t_0, \infty), {\rm L}_{x}^3(\mathbb{R}^5))}^2.
	\end{aligned}
	\end{equation*}
	
	The continuity argument tells us if $ \left\Vert \mathcal{S}(t-t_0) \b{\rm u}(t_0) \right\Vert_{{\rm L}_{t}^6((t_0, \infty), {\rm L}_{x}^3(\mathbb{R}^5))} \le \eps^{\frac{1}{24}} $ then \eqref{b3} holds.
	
	Now, we turn to the following identity
	\begin{equation*}
	\mathcal{S}(t-t_0) \b{\rm u}(t_0) = \mathcal{S}(t) \b{\rm u}_0 + i \int_{0}^{t_0} \mathcal{S}(t-s) \b{\rm f}(\b{\rm u}(s)) {\rm d}s.
	\end{equation*}
	Combining this with \eqref{b0} then suffices to establish
	\begin{equation}
	\left\Vert \int_{0}^{t_0} {\mathcal S}(t-s)\b{\rm f}(\b{\rm u}(s)) {\rm d}s \right\Vert_{{\rm L}_{t}^6((t_0, \infty), {\rm L}_{x}^3(\mathbb{R}^5))} \lesssim_{E_0} \eps^{\frac{1}{24}}.
	\end{equation}
	Together with Lemma \ref{LM3.2}, which completes the proof.
\end{proof}

References related to this section are \cite{Dodson2016, Dodson2018, Meng20200, Meng2020, Tao2004}.

\section{Interaction Morawetz estimate}
In this part, we are now in the position to prove the interaction Morawetz estimate, Proposition \ref{Me}.

We start with a functional of $ \b{\rm u}(t, x) = (u^1(t, x), u^2(t, x), u^3(t,x))^T $, the solution of \eqref{NLS system}.
\begin{equation}
\begin{aligned}
M(t) = &  \int_{\mathbb{R}^5} M^a_y(t) N_{\kappa_1 \kappa_2 \kappa_3}  {\rm d}y,
\end{aligned}
\label{M}
\end{equation}
where
\[
M^a_y(t):=2\int_{\mathbb{R}^5} \Im \left( \sum_{i=1}^3 \overline{u^i(x)} \nabla u^i(x) \right) \cdot \nabla a(x-y) {\rm d}x, 
\]
\[
N_{\kappa_1 \kappa_2 \kappa_3} := \kappa_1 \kappa_2 \kappa_3 \sum_{i=1}^3 \frac{\vert u^i(y) \vert^2}{\kappa_i}, 
\]
and $ a \in C^{\infty} $ is a real function to be chosen later.

Let $ R \gg 1 $ be sufficiently large and let $ \phi $ and $ \phi_1 $ both be radial satisfying
\begin{equation*}
\phi(x) = \frac{1}{\omega_{5}R^{5}} \int_{\mathbb{R}^5} \chi^2 \left(\frac{x - s}{R}\right) \chi^2 \left(\frac{s}{R}\right) {\rm d}s,
\end{equation*}
and
\begin{equation*}
\phi_1(x) = \frac{1}{\omega_{5}R^{5}} \int_{\mathbb{R}^5} \chi^3 \left(\frac{x - s}{R}\right) \chi^2 \left(\frac{s}{R}\right) {\rm d}s,
\end{equation*}
where $ \omega_{5} $ is the volume of unit ball in $ \mathbb{R}^5 $ and $ \Gamma $ be as in \eqref{Gamma}.
Finally, we define
\begin{equation*}
\psi(x) = \frac{1}{\vert x \vert} \int_{0}^{\vert x \vert} \phi(r) {\rm d}r, \quad a(x) = \int_{0}^{\vert x \vert} \psi(r) r {\rm d}r.
\end{equation*}
\begin{proof}[Proof of Proposition \ref{decay}]
We rely on the equation \eqref{NLS system} to change $ \partial_t \b{\rm u} $ equally into the derivative of $ \b{\rm u} $ with respect to the space variable $ x $.
Then we have
\begin{equation}
\begin{aligned}
\frac{\mathrm{d}}{\mathrm{d}t}M(t)
= & \int_{\mathbb{R}^5} \partial_t M^a_y(t) N_{\kappa_1 \kappa_2 \kappa_3} {\rm d}y +\int_{\mathbb{R}^5} M^a_y(t) \partial_t N_{\kappa_1 \kappa_2 \kappa_3} {\rm d}y\\
=& - \int_{\mathbb{R}^5} \int_{\mathbb{R}^5} \Delta W_{\kappa_1 \kappa_2 \kappa_3} \Delta a(x-y) N_{\kappa} {\rm d}x {\rm d}y \\
& -2 \int_{\mathbb{R}^5} \int_{\mathbb{R}^5} \Re \left( \overline{u^1(x)u^2(x)} u^3(x) \right) \Delta a(x-y) N_{\kappa_1 \kappa_2 \kappa_3} {\rm d}x {\rm d}y \\
& +4 \sum_{k=1}^{5} \sum_{j=1}^{5} \int_{\mathbb{R}^5} \int_{\mathbb{R}^5} R^{jk}_{\kappa_1 \kappa_2 \kappa_3 } a_{jk}(x-y) N_{\kappa_1 \kappa_2 \kappa_3} {\rm d}x {\rm d}y \\
& -4 \kappa_1\kappa_2\kappa_3\sum_{k=1}^{5} \sum_{j=1}^{5} \int_{\mathbb{R}^5} \int_{\mathbb{R}^5} A^{j} a_{jk}(x-y) B^{k} {\rm d}x {\rm d}y,
\end{aligned}
\label{M1}
\end{equation}
where 
\begin{equation}
\begin{aligned}
& W_{\kappa_1 \kappa_2 \kappa_3} = \sum_{i=1}^3 \frac{\kappa_1 \kappa_2 \kappa_3}{\kappa_i} \vert u^i(x) \vert^2, \quad A^{j} = \Im \left( \sum_{i=1}^3 \overline{u^i(x)}u^i_j(x) \right), \\
& R^{jk}_{\kappa_1 \kappa_2 \kappa_3} = \Re \left( \sum_{i=1}^3 \kappa_i u^i_{k}(x) \overline{u^i_{j}(x)} \right), \quad B^{k} = \Im \left( \sum_{i=1}^3 \overline {u^i(y)} u^i_k(y) \right), 
\end{aligned}
\end{equation}
for $ j, k \in \{ 1, 2, 3, 4, 5 \} $ and we use $ \partial_l $ to denote the partial differential respected to $ x_l $ for $ l \in \{ 1, 2, 3, 4, 5 \} $.

Direct computations yield $ \Delta a = 4 \psi + \phi $ and $ a_{jk} = \delta_{jk} \phi + P_{jk} (\psi - \phi) $,
where $ P_{jk}(x) := \delta_{jk} - \frac{x_{j}x_{k}}{\vert x \vert^2} $ and $ \psi - \phi \ge 0 $.
Due to the facts above, we have
\begin{equation}
\begin{aligned}
\frac{\rm d}{{\rm d}t}M(t)
= & -\int_{\mathbb{R}^5} \int_{\mathbb{R}^5} \Delta W_{\kappa_1\kappa_2 \kappa_3} \left( 4 \psi(x-y) + \phi(x-y) \right) N_{\kappa_1 \kappa_2 \kappa_3} {\rm d}x {\rm d}y \\
& - 2 \int_{\mathbb{R}^5} \int_{\mathbb{R}^5} \Re \left( \overline{u^1(x)u^2(x)} u^3(x) \right) \left( 4 \psi(x-y) + \phi(x-y) \right) N_{\kappa_1 \kappa_2 \kappa_3} {\rm d}x {\rm d}y \\
& + 4 \int_{\mathbb{R}^5} \int_{\mathbb{R}^5} L_{\kappa_1\kappa_2\kappa_3} \phi(x-y) N_{\kappa_1 \kappa_2 \kappa_3} {\rm d}x {\rm d}y \\
& + 4 \sum_{k=1}^{5} \sum_{j=1}^{5} \int_{\mathbb{R}^5} \int_{\mathbb{R}^5} R^{jk}_{\kappa_1 \kappa_2 \kappa_3} P_{jk}(x-y) (\psi(x-y) - \phi(x-y)) N_{\kappa_1 \kappa_2 \kappa_3} {\rm d}x {\rm d}y \\
& - 4 \int_{\mathbb{R}^5} \int_{\mathbb{R}^5} A \phi(x-y) B {\rm d}x {\rm d}y \\
& - 4 \sum_{k=1}^{5} \sum_{j=1}^{5} \int_{\mathbb{R}^5} \int_{\mathbb{R}^5} A^{j} P_{jk}(x-y) (\psi(x-y) - \phi(x-y)) B^{k} {\rm d}x {\rm d}y \\
=: & \mathcal{A+B+C+D+E+F},
\end{aligned}
\end{equation}

where
\begin{equation}
\begin{aligned}
& A = \Im \left( \sum_{i=1}^3 \overline{u^i(x)} \nabla u^i(x) \right), \quad B = \Im \left( \sum_{i=1}^3 \overline{u^i(y)} \nabla u^i(y) \right), \\
& L_{\kappa_1 \kappa_2 \kappa_3 } = \sum_{i=1}^3 \kappa_i \vert \nabla u^i(x) \vert^2.
\end{aligned}
\end{equation}

$ \mathcal{A} $ remains itself unchanged because it will be treated as an error term below.

As for $ \mathcal{B} $, we make use of the decomposition identity $ 4 \psi + \phi = 5 \phi_{1} + 4 (\psi - \phi) + 5 (\phi - \phi_{1}) $ and deduce
\begin{equation}
\begin{aligned}
\mathcal{B} = & -\frac{10}{\omega_{5}R^5} \int_{\mathbb{R}^5} \int_{\mathbb{R}^5} \int_{\mathbb{R}^5} \Re \left( \overline{u^1(x)u^2(x)} u^3(x) \right) \chi^{3} \left( \frac{x - s}{R} \right) \chi^{2} \left( \frac{y - s}{R} \right) N_{\kappa_1 \kappa_2 \kappa_3} {\rm d}x {\rm d}y {\rm d}s \\
& -8 \int_{\mathbb{R}^5} \int_{\mathbb{R}^5} \Re \left( \overline{u^1(x)u^2(x)} u^3(x) \right) \left( \psi(x-y) - \phi(x-y) \right) N_{\kappa_1 \kappa_2\kappa_3} {\rm d}x {\rm d}y \\
& -10 \int_{\mathbb{R}^5} \int_{\mathbb{R}^5} \Re \left( \overline{u^1(x)u^2(x)} u^3(x) \right) \left( \phi(x-y) - \phi_{1}(x, y) \right) N_{\kappa_1\kappa_2\kappa_3} {\rm d}x {\rm d}y.
\end{aligned}
\end{equation}

We claim the quantity of $ \mathcal{C + E} $ is Galilean invariant, that is,
invariant under the the transformation
\begin{equation*}
\b{\rm u}(t, x) \mapsto \b{\rm u^{\xi}} = ( u^{1,\xi}(t, x), u^{2,\xi}(t, x),u^{3,\xi} )^T := ( e^{\frac{ix\cdot\xi}{\kappa_1}}u^1, e^{\frac{ix\cdot\xi}{\kappa_1}}u^2,e^{\frac{ix\cdot\xi}{\kappa_1}}u^3 )^T, \quad\quad
\end{equation*}
for any $\b{\rm u} = (u^1, u^2,u^3)^T$ and $ \xi = \xi(t, s, R) $.
In fact, we can compute
\[
\sum_{i=1}^3 \kappa_i \vert \nabla u^{i, \xi}(x) \vert^2 
= L_{\kappa_1\kappa_2\kappa_3} + \sum_{i=1}^3 2\kappa_i \xi \cdot \Im \left( \overline{u^i(x)} \nabla u^i(x) \right) + \sum_{i=1}^3 \frac{\vert \xi \vert^2}{\kappa_i} \vert u^i(x) \vert^2, 
\]
\[
\sum_{i=1}^3 \kappa_i \vert u^{i, \xi}(y) \vert^2 = \sum_{i=1}^3 \kappa_i \vert u^i(y) \vert^2, 
\]
and
\[
\Im \left( \sum_{i=1}^3 \overline{u^{i, \xi}} \nabla u^{i, \xi} \right)
= A + \left( \sum_{i=1}^3 \frac{\xi}{\kappa_i} \vert u^i(x) \vert^2 \right).
\]
Thus,
\begin{equation*}
\begin{aligned}
& \left( \sum_{i=1}^3 \kappa_i \vert \nabla u^{i, \xi}(x) \vert^2 \right) \left( \sum_{i=1}^3 \frac{\vert u^{i, \xi}(y) \vert^2}{\kappa_i} \right) - \Im \left( \sum_{i=1}^3 \overline{u^{i, \xi}} \nabla u^{i, \xi} \right)(x) \Im \left( \sum_{i=1}^3 \overline{u^{i, \xi}} \nabla u^{i, \xi} \right)(y)  \\
= & ~ L_{\kappa_1\kappa_2\kappa_3} \left( \sum_{i=1}^3 \frac{\vert u^i(y) \vert^2}{\kappa_i} \right) - A \cdot B + \xi \cdot A \left( \sum_{i=1}^3 \frac{\vert u^i(y) \vert^2}{\kappa_i} \right) - \left( \sum_{i=1}^3 \frac{\vert u^i(x) \vert^2}{\kappa_i} \right) \xi \cdot \Im \left( \sum_{i=1}^3 \overline{u^{i, \xi}(y)} \nabla u^{i, \xi}(y) \right),
\end{aligned}
\end{equation*}
and hence the claim follows by symmetry of $ \chi^{2} $ and a change of variables.

The choice of $ \xi = \xi(t, s, R) $ in \eqref{xi} implies that 
\begin{equation*}
\int_{\mathbb{R}^5} \Im \left( \sum_{i=1}^3 \overline{u^{i, \xi}} \nabla u^{i, \xi}(x) \right) \chi ^{2} \left(\frac{x-s}{R}\right) {\rm d}x = 0.
\end{equation*}
As a result,
\begin{equation}
\begin{aligned}
\mathcal{C + E} = \frac{4}{\omega_{5}R^5} \int_{\mathbb{R}^5 \times \mathbb{R}^5 \times \mathbb{R}^5} & ~ \left( \sum_{i=1}^3 \kappa_i \vert \nabla u^{i, \xi}(x) \vert^2 \right) \chi^{2} \left( \frac{x - s}{R} \right) \chi^{2} \left( \frac{y - s}{R} \right)\\
& ~ \cdot\left( \sum_{i=1}^3 \frac{\kappa_1 \kappa_2 \kappa_3}{\kappa_i} \vert u^i(y) \vert^2 \right) {\rm d}x {\rm d}y {\rm d}s. 
\label{C+E}
\end{aligned}
\end{equation}

Note that, by Cauchy-Schwartz inequality, 
\begin{equation*}
\Im \left( \sum_{i=1}^3 \overline {u^i}\not\!\nabla u^i \right) \le \sqrt{ \sum_{i=1}^3 \vert \not\!\nabla u^i \vert^2} \sqrt{\sum_{i=1}^3 \vert u^i \vert^2},
\label{D+F}
\end{equation*}
we can get $ \mathcal{D + F} \ge 0 $.

To conclude, we deduce
\begin{equation*}
\begin{aligned}
&\frac{\mathrm{d}}{\mathrm{d}t}M(t)\\
&\ge \int_{\mathbb{R}^5} \int_{\mathbb{R}^5} \Delta W_{\kappa_1 \kappa_2 \kappa_3} \left( 4 \psi(x-y) + \phi(x-y) \right) \left( \sum_{i=1}^3 \vert u^i(y) \vert^2 \right) {\rm d}x {\rm d}y \\
& -\frac{10}{\omega_{5}R^5} \int_{\mathbb{R}^5} \int_{\mathbb{R}^5} \int_{\mathbb{R}^5} \Re \left( \overline{u^1(x)u^2(x)} u^3(x) \right) \chi^{3} \left( \frac{x - s}{R} \right) \chi^{2} \left( \frac{y - s}{R} \right) N_{\kappa_1 \kappa_2 \kappa_3} {\rm d}x {\rm d}y {\rm d}s \\
& -8 \int_{\mathbb{R}^5} \int_{\mathbb{R}^5} \Re \left( \overline{u^1(x)u^2(x)} u^3(x) \right) \left( \psi(x-y) - \phi(x-y) \right) N_{\kappa_1 \kappa_2 \kappa_3 } {\rm d}x {\rm d}y \\
& -10 \int_{\mathbb{R}^5} \int_{\mathbb{R}^5} \Re \left( \overline{u^1(x)u^2(x)} u^3(x) \right) \left( \phi(x-y) - \phi_{1}(x, y) \right) N_{\kappa_1 \kappa_2 \kappa_3 } {\rm d}x {\rm d}y \\
& +\frac{4}{\omega_{5}R^5} \int_{\mathbb{R}^5} \int_{\mathbb{R}^5} \int_{\mathbb{R}^5} L^{\xi}_{\kappa_1\kappa_2\kappa_3} \chi^{2} \left( \frac{x - s}{R} \right) \chi^{2} \left( \frac{y - s}{R} \right) N_{\kappa_1\kappa_2\kappa_3} {\rm d}x {\rm d}y {\rm d}s \\
= &: \mathcal{A + G + H + I + J}.
\end{aligned}
\end{equation*}

Next, we will average this inequality over $ t \in I $ and logarithmically over $ R \in [R_0, R_0e^{J}] $.

Looking back at the definition of $ M(t) $, we find the upper bound $ \sup_{t \in \mathbb{R}} \vert M(t) \vert \lesssim RE_0^2 $. By the fundamental theorem of calculus, we have
\begin{equation}
\left\vert \frac{1}{T_0} \int_{I} \frac{1}{J} \int_{R_0}^{R_0e^{J}} \frac{\mathrm{d}}{\mathrm{d}t}M(t) \frac{{\rm d}R}{R} {\rm d}t \right\vert \lesssim_{\kappa_1 \kappa_2 \kappa_3} \frac{1}{T_0} \frac{R_0e^{J}}{J} E_0^2.
\end{equation}

We turn to $ \mathcal{A} $ integrating by parts.
\begin{equation*}
\begin{aligned}
\mathcal{A} \gtrsim_{\kappa_1\kappa_2\kappa_3} - \int_{\mathbb{R}^5} \int_{\mathbb{R}^5} & ~ \left( \sum_{i=1}^3 \vert u^i(x) \vert \vert \nabla u^i(x) \vert\right) \\
& ~ \left\vert 4 \nabla\psi(x-y) + \nabla\phi(x-y) \right\vert \left( \sum_{i=1}^3 \vert u^i(y) \vert^2 \right) {\rm d}x {\rm d}y.
\end{aligned}
\end{equation*}
The facts $ \vert \nabla \phi \vert \lesssim \frac{1}{R} $ and $ \vert \nabla \psi \vert = \vert \frac{x}{\vert x \vert^2} (\phi - \psi) \vert \lesssim \min \{ \frac{1}{R}, \frac{R}{\vert x \vert^2} \} $ tell us
\begin{equation}
\frac{1}{T_0} \int_{I} \frac{1}{J} \int_{R_0}^{R_0e^{J}} \mathcal{A} \frac{{\rm d}R}{R} {\rm d}t \gtrsim_{\kappa_1\kappa_2\kappa_3} - \frac{1}{JR_0} E_0^2.
\label{A}
\end{equation}

For $ \mathcal{G + J} $, we can establish a lower bound for these terms by Lemma \ref{coerb} after choosing $ \chi_{R}(x) = \chi \left( \frac{x-s}{R} \right) $, that is
\begin{equation}
\begin{aligned}
& \frac{1}{T_0} \int_{I} \frac{1}{J} \int_{R_0}^{R_0e^{J}} \mathcal{G + J} \frac{{\rm d}R}{R} {\rm d}t \\
\gtrsim & \frac{\delta}{JT_0} \int_{I} \int_{R_0}^{R_0e^{J}}\frac{1}{R^5} \int_{\mathbb{R}^5} \int_{\mathbb{R}^5} \int_{\mathbb{R}^5} L^{\xi}_{\kappa_1 \kappa_2 \kappa_3} \chi^{2} \left( \frac{x - s}{R} \right) \chi^{2} \left( \frac{y - s}{R} \right) N_{\kappa_1 \kappa_2 \kappa_3} {\rm d}x {\rm d}y {\rm d}s \frac{{\rm d}R}{R} {\rm d}t.
\end{aligned}
\label{G+J}
\end{equation}

As for $ \mathcal{H} $, by construction,
\begin{equation*}
\vert \psi(x) - \phi(x) \vert \lesssim \min \left\{ \frac{\vert x \vert}{R}, \frac{R}{\vert x \vert} \right\}.
\end{equation*}
We deduce
\begin{equation}
\frac{1}{T_0} \int_{I} \frac{1}{J} \int_{R_0}^{R_0e^{J}} \mathcal{H} \frac{{\rm d}R}{R} {\rm d}t \gtrsim_{\kappa_1 \kappa_2 \kappa_3 } - \frac{1}{J} E_0^2.
\label{H}
\end{equation}

Finally, similar to the estimates of $ \mathcal{H} $, we have
\begin{equation}
\frac{1}{T_0} \int_{I} \frac{1}{J} \int_{R_0}^{R_0e^{J}} \mathcal{I} \frac{{\rm d}R}{R} {\rm d}t \gtrsim_{\kappa_1 \kappa_2 \kappa_3} - \eps E_0^2
\label{I}
\end{equation}
since $ \vert \phi(x-y) - \phi_{1}(x, y) \vert \lesssim \eps $.

Collecting \eqref{A}, \eqref{G+J}, \eqref{H}, and \eqref{I}, we find
\begin{equation*}
\begin{aligned}
&\frac{\delta}{JT_0} \int_{I} \int_{R_0}^{R_0e^{J}}\frac{1}{R^5} \int_{\mathbb{R}^5} \int_{\mathbb{R}^5} \int_{\mathbb{R}^5} L^{\xi}_{\kappa_1 \kappa_2 \kappa_3} \chi^{2} \left( \frac{x - s}{R} \right) \chi^{2} \left( \frac{y - s}{R} \right) N_{\kappa_1 \kappa_2 \kappa_3} {\rm d}x {\rm d}y {\rm d}s \frac{{\rm d}R}{R} {\rm d}t \\
&\lesssim_{\kappa_1\kappa_2\kappa_3}  \left( \frac{R_0e^{J}}{JT_0} + \frac{1}{JR_0} + \frac{1}{J} + \eps \right) E_0^2,
\end{aligned}
\end{equation*}
which completes the proof of Proposition \ref{decay}.
\end{proof}
\section{Proof of the main result}
In this section, we combine the results in Section 4 and Section 5 to complete the proof of Theorem \ref{main}.

\textbf{The proof of Theorem \ref{main}:}

First of all, using the scaling
\[
\b{\rm u}_{\lambda}(t, x) = \lambda^2 \b{\rm u} \left( \lambda^2 t, \lambda x \right),
\]
we can always fix $ \lambda > 0 $ such that \eqref{E0} holds.
In order to establish \eqref{b1}, we change \eqref{Me} into
\[
\frac{\delta}{JT_0} \int_{I} \int_{R_0}^{R_0e^{J}} \frac{1}{R^5} \int_{\mathbb{R}^5} \int_{\mathbb{R}^5} \int_{\mathbb{R}^5} \mathcal{L}^{\xi}_{\kappa_1 \kappa_2 \kappa_3} \mathcal{N}_{\kappa_1 \kappa_2 \kappa_3} {\rm d}x {\rm d}y {\rm d}s \frac{{\rm d}R}{R} {\rm d}t \lesssim_{\kappa_1  \kappa_2 \kappa_3 } \left( \frac{R_0 e^{J}}{J T_0} + \eps \right) E_0^2,
\]
where
\[
\mathcal{L}^{\xi}_{\kappa_1 \kappa_2 \kappa_3} = \sum_{i=1}^3 \kappa_i \left\vert \nabla \left( \chi \left( \frac{x-s}{R} \right) u^{i, \xi}(x) \right) \right\vert^2,
\]
and
\[
\mathcal{N}_{\kappa_1 \kappa_2 \kappa_3} = \kappa_1 \kappa_2 \kappa_3 \sum_{i=1}^3 \frac{ \left\vert \chi^{2} \left( \frac{y - s}{R} \right) u^i(y) \right\vert^2}{\kappa_i}.
\]
If we choose $ J = \eps^{-1} R_0, T_0 = e^{J} $, then
\[
\frac{\delta}{JT_0} \int_{I} \int_{R_0}^{R_0e^{J}} \frac{1}{R^5} \int_{\mathbb{R}^5} \int_{\mathbb{R}^5} \int_{\mathbb{R}^5} \mathcal{L}^{\xi}_{\kappa_1\kappa_2 \kappa_3} \mathcal{N}_{\kappa_1\kappa_2 \kappa_3} {\rm d}x {\rm d}y {\rm d}s \frac{{\rm d}R}{R} {\rm d}t \lesssim_{\kappa_1 \kappa_2 \kappa_3 , E_0} \eps.
\]

Considering the support of $ \chi $ in \eqref{Gamma} and using the integral mean value theorem, we can omit writing some constants and get
\[
\frac{1}{T_0} \int_{I} \left\Vert \left( \chi u^1, \chi u^2, \chi u^3 \right) \right\Vert^2_{{\rm \dot{H}}_{x}^1(\mathbb{R}^5)} \left\Vert \left( \chi u^1, \chi u^2, \chi u^3 \right) \right\Vert^2_{{\rm L}_{y}^2(\mathbb{R}^5)} {\rm d}t
\lesssim_{\kappa_1 \kappa_2 \kappa_3, E_0} \eps.
\]
The interval $ I $ is divided into many sub-intervals of the same length $ l = \eps^{-\frac{1}{4}} $.
By the pigeonhole principle, there exists $ I_{k} $ such that
\begin{equation*}
\int_{I_{k}} \left\Vert \left( \chi u^1, \chi u^2, \chi u^3 \right) \right\Vert^2_{{\rm \dot{H}}_{x}^1(\mathbb{R}^5)} \left\Vert \left( \chi u^1, \chi u^2, \chi u^3 \right) \right\Vert^2_{{\rm L}_{y}^2(\mathbb{R}^5)} {\rm d}t
\lesssim_{E_0} \eps^{\frac{3}{4}},
\end{equation*}
which means
\begin{equation*}
\left\Vert \left( \chi u^1, \chi u^2, \chi u^3 \right) \right\Vert_{{\rm L}_t^4(I_k, {\rm L}_x^{\frac{5}{2}}(\mathbb{R}^5))}^4=\int_{I_{k}} \left\Vert \left( \chi u^1, \chi u^2, \chi u^3 \right) \right\Vert^4_{{\rm L}^{\frac{5}{2}}(\mathbb{R}^5)} {\rm d}t \lesssim_{E_0} \eps^{\frac{3}{4}}.
\end{equation*}
Thanks to H\"older inequality, we have
\begin{equation*}
\begin{aligned}
\left\Vert \left( \chi u^1, \chi u^2, \chi u^3 \right) \right\Vert_{{\rm L}_{t}^2(I_{k}, {\rm L}_{x}^{\frac{5}{2}}(\mathbb{R}^5))}
\le & ~ \left\Vert \left( \chi u^1, \chi u^2, \chi u^3 \right) \right\Vert_{{\rm L}_t^4(I_k, {\rm L}_x^{\frac{5}{2}}(\mathbb{R}^5))} \left\Vert \b{\rm 1} \right\Vert_{{\rm L}_t^4(I_k, {\rm L}_x^\infty(\mathbb{R}^5))} \\
\lesssim&_{E_0} ~ \eps^{\frac3{16}} \eps^{-\frac1{16}}
= \eps^{\frac18}.
\end{aligned}
\end{equation*}
At last, by interpolation,
\begin{equation*}
\begin{aligned}
& ~ \left\Vert \left( \chi u^1, \chi u^2, \chi u^3 \right) \right\Vert_{{\rm L}_{t}^6(I_{k}, {\rm L}_{x}^{3}(\mathbb{R}^5))} \\
\le & ~ \left\Vert \left( \chi u^1, \chi u^2, \chi u^3 \right) \right\Vert^{\frac{1}{3}}_{{\rm L}_{t}^2(I_{k}, {\rm L}_{x}^{\frac{5}{2}}(\mathbb{R}^5))}
\left\Vert \left( \chi u^1, \chi u^2, \chi u^3\right) \right\Vert^{\frac{2}{3}}_{{\rm L}_{t}^{\infty}(I_{k}, {\rm H}_{x}^1(\mathbb{R}^5))}
\lesssim_{E_0} \eps^{\frac{1}{24}}, 
\end{aligned}
\end{equation*}
which implies \eqref{b1} holds.

So the scattering criterion, Proposition \ref{scat}, tells us that $ \b{\rm u} $ scatters, which completes the proof of Theorem \ref{main}.


\begin{thebibliography}{10}
\bibitem{Bourgain1999} J. Bourgain, Global wellposedness of defocusing critical nonlinear Schr\"odinger equation in the radial case, J. Amer. Math. Soc., 12(1999), 145-171.
\bibitem{Ca}
T.~Cazenave.
\emph{Semilinear Schr\"odinger equations.} American Mathematical Society, 2003.

\bibitem{CK} M. Christ and A. Kiselev, Maximal functions associated to filtrations. J. Funct. Anal., 179 (2001), 409--425.
\bibitem{Colin2009} M. Colin, Th. Colin and M. Ohta, Stability of solitary waves for a system of nonlinear Schr\"odinger equations with three wave interaction. Ann. Inst. H. Poincar\'{e} Anal. Non Lin\'{e}aire 26 (2009), no. 6, 2211-2226.
\bibitem{Colliander2008} J. Colliander, M. Keel, G. Staffilani, H. Takaoka, and T. Tao, Global well-posedness and scattering for the energy-critical nonlinear Schr\"odinger equation in $ \mathbb{R}^{3} $, Annals of Math., 167(2008), 767-865.
\bibitem{Dodson2012} B. Dodson, Global well-posedness and scattering for the defocusing, $ L^{2} $-critical, nonlinear Schr\"odinger equation when $ d \geqslant 3 $, J. Amer. Math. Soc., 25(2012), 429-463.


\bibitem{Dodson2017} B. Dodson, C. Miao, J. Murphy and J. Zheng, The defocusing quintic NLS in four space dimensions, Ann. Inst. Henri Poincar\'{e}-AN, 34, (2017), no. 2, 759-787.
\bibitem{Dodson2016} B. Dodson and J. Murphy, A new proof of scattering below the ground state for the 3D radial focusing cubic NLS, Proc. Amer. Math. Soc. 145 (2017), no. 11, 4859-4867.
\bibitem{Dodson2018} B. Dodson and J. Murphy, A new proof of scattering below the ground state for the non-radial focusing NLS, Math. Res. Lett. 25 (2018), no. 6, 1805-1825.
\bibitem{Duyckaerts2007} T. Duyckaerts, J. Holmer and S. Roudenko, Scattering for the non-radial 3D cubic nonlinear Schr\"odinger equation. Math. Res. Lrtt. 15(2008), 1233-1250.
\bibitem{Fo} D. Fochi, \emph{Inhomogeneous Strichartz estimate.} J. Hyperbolic Differ. Equ. 2, no. 1, 1-24(2005).
\bibitem{Ginibre1992} J. Ginibre and G. Velo, Smoothing properties and retarded estimates for some dispersive evolution equations. Comm. Math. Phys., 144 (1992), no. 1, 163-188.
\bibitem{Hamano2018} M. Hanamo, Global dynamics below the ground state for the quadratic Schr\"odinger system in 5d, preprint, arXiv:1805.12245, 2018.
\bibitem{Hamano2019} M. Hanamo, T. Inui and K. Nishimura, Scattering for the quadratic nonlinear Schr\"odinger system in $ \mathbb{R}^5 $ without mass-resonance condition, preprint, arXiv:1903.05880, 2019.
\bibitem{HOT}N. Hayashi, T. Ozawa and K. Tanaka, \emph{On a system of nonlinear Schr\"odinger equations with quadratic interaction.} Ann. I. H. Poincar\'e-AN, 30(2013), 661-690.
\bibitem{Keel1998} M. Keel and T. Tao, Endpoint Strichartz estimate. Amer. J. Math., 120 (1998), no. 5, 955-980.
\bibitem{Kenig2006} C. E. Kenig and F. Merle, Global well-posedness, scattering and blow-up for the energy-critical, focusing, non-linear Schr\"odinger equation in the radial case, Invent. Math., 166(2006), no. 3, 645-675.
\bibitem{Killip2010} R. Killip and M. Visan, The focusing energy-critical nonlinear Schr\"odinger equation in dimensions five and higher. Amer. J. Math., 132 (2010), 361-424. 
\bibitem{Masaki2021} S. Masaki and M. Hamano, A sharp scattering threshold level for mass-subcritical nonlinear Schr\"odinger system, Discrete and Continuous Dynamical Systems 41(2021), no. 3, 1415-1447. 
\bibitem{Meng20200} F. Meng, A new proof of scattering for the 5D radial focusing Hartree equation, Applicable Analysis, DOI: 10.1080/00036811.2020.1859491. 
\bibitem{Meng2020} F. Meng and C. Xu, Scattering For Mass-resonance Nonlinear Schr\"odinger system in 5D, Journal of Differential Equations, 275(2021), 837-857. 
\bibitem{Miao2013} C. Miao, G. Xu, L. Zhao, The dynamics of the 3D radial NLS with the combined terms, Commun. Math. Phys., 318, (2013), no. 1, 767-808.
\bibitem{Miao2014} C. Miao, J. Murphy, J. Zheng, The defocusing energy-supercritical NLS in four space dimensions, J. Functional Analysis, 267, (2014), no. 2, 1662-1724.
\bibitem{Miao2017} C. Miao, T. Zhao, J. Zheng, On the 4D nonlinear Schr\"odinger equation with combined terms under the energy threshold, Calculus of Variations and PDE, 56-179, (2017), no. 1, 1-39.
\bibitem{Strichartz1977} R. Strichartz, Restriction of Fourier transform to quadratic surfaces and decay of solutions of wave equations. Duck Math. J. 44 (1977), 705-774.
\bibitem{Tao2004} T. Tao, On the asymptotic behavior of large radial data for a focusing non-linear Schr\"odinger equation. Dyn. Partial Differ. Equ., 1(2004), no. 1, 1-48.
\bibitem{Visan2007} M. Visan, The defocusing enery-critical nonlinear Schr\"odinger equation in higher dimensions, Duck Math. J., 138(2007), 281-374.

\bibitem{Wein1982} M. I. Weinstein, \emph{Nonlinear Schr\"odinger equations and sharp interpolation estimates.} Comm. Math. Phys. 87(1982/83), no. 4, 567-576.

\end{thebibliography}
\end{document}